\newcommand*{\affaddr}[1]{#1} 
\newcommand*{\affmark}[1][*]{\textsuperscript{#1}}
\newcommand*{\email}[1]{\texttt{#1}}
\providecommand{\keywords}[1]
{
  \small	
  \textbf{\textit{Keywords---}} #1
}
\def\grad{\nabla}
\def\ba{\mathbf{a}}
\def\bb{\mathbf{b}}
\def\bd{\mathbf{d}}
\def\bg{\mathbf{g}}
\def\br{\mathbf{r}}
\def\bs{\mathbf{s}}
\def\bt{\mathbf{t}}
\def\bu{\mathbf{u}}
\def\bw{\mathbf{w}}
\def\bx{\mathbf{x}}  
\def\by{\mathbf{y}}
\def\bz{\mathbf{z}}
\def\bA{\mathbf{A}}
\def\bD{\mathbf{D}}
\def\bI{\mathbf{I}}
\def\bY{\mathbf{Y}}
\def\bX{\mathbf{X}}
\def\bo{\mathbf{0}}
\def\m{{\boldsymbol{\mu}}}
\def\n{{\boldsymbol{\nu}}}
\def\th{{\boldsymbol{\theta}}}
\def\b{{\boldsymbol{\beta}}}
\def\cB{\mathcal{B}}
\def\cD{\mathcal{D}}
\def\cE{\mathcal{E}}
\def\cG{\mathcal{G}}
\def\cL{\mathcal{L}}
\def\cN{\mathcal{N}}
\def\cO{\mathcal{O}}
\def\cP{\mathcal{P}}
\def\cQ{\mathcal{Q}}
\def\cS{\mathcal{S}}
\def\mB{\mathbb{B}}
\def\mR{\mathbb{R}}
\def\smskip{\smallskip}
\def\texitem#1{\par\smskip\noindent\hangindent 25pt
               \hbox to 25pt {\hss #1 ~}\ignorespaces}
\def\norm#1{\|#1\|}
\newcommand{\BEAS}{\begin{eqnarray*}}
\newcommand{\EEAS}{\end{eqnarray*}}
\newcommand{\BEA}{\begin{eqnarray}}
\newcommand{\EEA}{\end{eqnarray}}
\newcommand{\BEQ}{\begin{eqnarray}}
\newcommand{\EEQ}{\end{eqnarray}}
\newcommand{\BIT}{\begin{itemize}}
\newcommand{\EIT}{\end{itemize}}
\newcommand{\BNUM}{\begin{enumerate}}
\newcommand{\ENUM}{\end{enumerate}}
\newcommand{\BA}{\begin{array}}
\newcommand{\EA}{\end{array}}
\newif\ifpagenumbering
\DeclareMathOperator*{\supp}{supp}
\DeclareMathOperator*{\argmin}{argmin}
\def\fprod#1{\left\langle#1\right\rangle}
\def\prox#1{\mathbf{prox}_{#1}}
\newtheorem{theorem}{Theorem}[section]
\newtheorem{lemma}{Lemma}[section]
\newtheorem{remark}{Remark}
\newtheorem{definition}{Definition}[section]
\begin{document}

\date{\today}
\title{\Large\bfseries A first-order optimization algorithm for statistical learning with hierarchical sparsity structure}

%

\author{%
Dewei Zhang\affmark[1], Yin Liu\affmark[1], Sam Davanloo Tajbakhsh\affmark[1]\thanks{Corresponding author}
\\
\email{\{zhang.8705,liu.6630,davanloo.1\}@osu.edu}\\
\\
\affaddr{\affmark[1]Department of Integrated Systems Engineering}\\
\affaddr{The Ohio State University}%
}

\maketitle

\vspace{-0.5cm}
\begin{abstract}
In many statistical learning problems, it is desired that the optimal solution conforms to an a priori known sparsity structure represented by a directed acyclic graph. Inducing such structures by means of convex regularizers requires nonsmooth penalty functions that exploit group overlapping. Our study focuses on evaluating the proximal operator of the Latent Overlapping Group lasso developed by \citet{jacob2009group}. We implemented an Alternating Direction Method of Multiplier with a sharing scheme to solve large-scale instances of the underlying optimization problem efficiently. In the absence of strong convexity, global linear convergence of the algorithm is established using the error bound theory. More specifically, the paper contributes to establishing primal and dual error bounds when the nonsmooth component in the objective function does not have a polyhedral epigraph. We also investigate the effect of the graph structure on the speed of convergence of the algorithm. Detailed numerical simulation studies over different graph structures supporting the proposed algorithm and two applications in learning are provided.
\end{abstract}


\keywords{Proximal methods, error bound theory, Alternating Direction Method of Multipliers, hierarchical sparsity structure, latent overlapping group lasso.}

\section{Introduction}
Convex sparsity-inducing regularization functions play an important role in different fields including machine learning, statistics, and signal processing~\citep{hastie15statistical}. Some well-known regularizers e.g. \emph{lasso}~\citep{tibshirani1996regression} or \emph{group lasso}~\citep{yuan2006model} are commonly used in different learning frameworks to induce sparsity which allows simultaneous model fitting and feature selection. In contrast to lasso which assumes no a priori knowledge on sparsity pattern, group lasso assumes that variables belong to a priori known groups and the variables within a group tend to affect response similarly, i.e., all are simultaneously zero or nonzero. This introduced more elaborate forms of zero/nonzero patterns, known as \emph{structured sparsity}, to the literature~\citep{bach2012structured}. The focus of this paper is to address structured sparsities represented by a Directed Acyclic Graph (DAG) and the convex Latent Overlapping Group (LOG) lasso regularizer of \cite{jacob2009group} to induce such structure. To be more specific, we develop an optimization framework that allows incorporating this regularizer in large-scale learning problems for huge DAGs. In the remainder of this section, we discuss hierarchical structured sparsity following a DAG, convex regularizers to induce hierarchical sparsity structures, and the proximal mapping of the LOG lasso regularizer.

\subsection{Hierarchical structured sparsity}
Let $\cD=(\cS,\cE)$ be a DAG where $\cS=\{s_1,...,s_N\}$ is the index set of nodes, and $\cE$ is the set of ordered pairs of node indices with an edge from the first to second element, e.g. $(s_i,s_{i'})$ is an edge from $s_i$ to $s_{i'}$. Furthermore, let each node $i$ of the graph contains a set of $d_i$ variables where their indices are contained in $s_i$. We will refer to the variables in node $i$ by $\b_{s_i}$. 

The graph structure contains the sparsity structure between groups of variables in each node. For instance, consider the following two DAGs and assume
\begin{figure}[htbp]
    \centering
    \begin{subfigure}[b]{0.3\textwidth}
        \centering
        \includegraphics[width=0.2\textwidth]{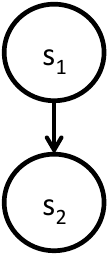}
        \caption{A DAG with two nodes}
        \label{fig:simple0}
    \end{subfigure}
    \qquad
    \begin{subfigure}[b]{0.3\textwidth}
        \centering
        \includegraphics[width=0.5\textwidth]{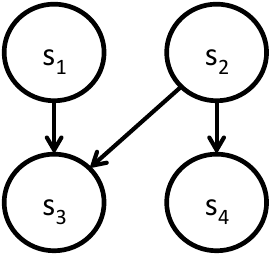}
        \caption{A DAG with four nodes}
        \label{fig:simple1}
    \end{subfigure}
    \caption{Two Directed Acyclic Graphs (DAG)}
    \label{fig:simple}
\end{figure}
the variables in a single node will be all simultaneously non-zero or all zero. The sparsity hierarchy introduced by the graph in Figure~\ref{fig:simple0} is $(s_1=0)\Rightarrow(s_2=0)$ and $(s_2\neq 0)\Rightarrow(s_1\neq 0)$. Note that $s_j=0$ means that all of the variables in $s_j$ are equal to zero; similarly, $(s_j\neq 0)$ means all of the variables in $s_j$ are nonzero. However, there are scenarios where a node has more than one ancestor, e.g. node $s_3$ in Figure~\ref{fig:simple1}. Such scenarios can potentially be interpreted in two different ways. Under \emph{strong hierarchy} assumption, all of the immediate ancestor nodes need to be nonzero for their descendent node to be nonzero, e.g., in Figure~\ref{fig:simple1}, $(s_3\neq 0)\Rightarrow (s_1\neq 0, s_2\neq 0)$, and $(s_1=0 \text{ or } s_2=0)\Rightarrow (s_3=0)$. Under \emph{weak hierarchy} assumption, for a descendent node to be nonzero it suffices that any of its immediate ancestors be nonzero, e.g., in Figure~\ref{fig:simple1}, $(s_3\neq 0)\Rightarrow (s_1\neq 0 \text{ or } s_2\neq 0)$, and $(s_1=0, s_2=0)\Rightarrow (s_3=0)$ \citep{bien2013lasso}.

We are interested in statistical learning problems that require their solutions to follow given sparsity structures in form of DAGs. To be more specific, given a DAG $\cD$, there is a learning problem of the form
\begin{equation}\label{eq:main1}
\min_{\b} \quad \big\{ \cL(\b) \quad \text{s.t.} \quad \b\in\cB, \ \supp(\b)\in\cD \big\}
\end{equation}
where $\cL:\mR^d\rightarrow\mR$ is a smooth, convex or nonconvex loss function with $d=\sum_{i=1}^N d_i$ being the problem dimension, $\cB\subseteq\mR^d$ is a closed set, and with a slight abuse of notation, $\supp(\b)\in\cD$ denotes that the support of $\b$ (index set of its nonzero elements) follows $\cD$ in the \emph{strong} sense.  
One way to formulate $\supp(\b)\in\cD$, explicitly, is by introducing binary variables. For instance, assuming one variable per node, formulating the hierarchy in Figure~\ref{fig:simple0} shall be performed as
\begin{align*}
\begin{split}
z\epsilon\leq|\beta_1|,\quad |\beta_2|\leq z\mu, \quad z\in\{0,1\},
\end{split}
\end{align*}
where $\epsilon$ and $\mu$ are reasonably small and large numbers, respectively. Introduction of binary variables makes the optimization problem a Mixed Integer Program (MIP)~\citep{wolsey2014integer} - see also~\cite{bach2013learning,bach2010structured}. Finding the global optimal solution of large-scale MIPs for large DAGs is generally computationally challenging. We, however, would like to note some significant advances in MIP algorithms for statistical learning, specifically for feature selection -- see e.g. \cite{bertsimas2016best,manzour2019integer,atamturk2018strong,mazumder2017thediscrete,bertsimas2017sparse,bertsimas2019unified}. Similar to using $\ell_1$ norm as a convex approximation to $\ell_0$ (pseudo) norm to induce sparsity, there are convex regularizers that promote hierarchical sparsity structures. Needless to mention, these approximation methods do not guarantee exact conformance of their solutions to given hierarchies, but, they allow solving high-dimensional problems. 

\subsection{Group Lasso with overlaps vs. Latent Overlapping Group lasso}
There are mainly two convex regularizers to introduce hierarchical structured sparsity: 1. Group Lasso (GL) 2. Latent Overlapping Group Lasso (LOG)~\citep{yan2017hierarchical}. Given a set of groups of variables $\cG$, the GL regularizer is defined as
\begin{equation}\label{eq:GL}
\Omega_{\text{GL}}(\b)=\sum_{g\in\cG} w_g\norm{\b_g}
\end{equation}
where $w_g$ is a positive weight corresponding to group $g$ and $\b_g\in\mR^{|g|}$ is equal to $\b$  for elements whose indices belongs to $g$ and zero for other elements, and the $\norm{\cdot}$ is either an $\ell_2$ or $\ell_\infty$ norm. To induce hierarchical sparsity structure using the GL penalty, the groups should be defined in a descendants form, for instance, for the graph in Figure~\ref{fig:simple1} the groups should be $\cG=\{s_3,s_4,\{s_1,s_3\},\{s_2,s_3,s_4\}\}$ where $s_3=\text{descendants}(\cD;s_3)$, $\{s_1,s_3\}=\text{descendants}(\cD;s_1)$, $s_4=\text{descendants}(\cD;s_4)$, and $\{s_2,s_3,s_4\}=\text{descendants}(\cD;s_2)$. The group lasso sets to zero a union of a subset of groups introduced in $\cG$. However, since there are overlaps between the groups defined in $\cG$, the support of the solution induced by GL is not necessarily a union of the groups. This is because of the fact that the complement of a union of a subset of groups is not necessarily a union of groups.

As an alternative to GL, \citet{jacob2009group} introduced LOG regularizer which is defined as
\begin{equation}\label{eq:LOG}
\Omega_{\text{LOG}}(\b)=\inf_{\n^{(g)},\ g\in\cG} \left\{\sum_{g\in\cG}w_g\norm{\n^{(g)}}_2 \ \ \text{s.t.} \ \sum_{g\in\cG}\n^{(g)}=\b,\ \n^{(g)}_{g^c}=0 \right\}
\end{equation} 
which sets to zero a subset of groups. Since $\b$ is the sum of latent variables $\n^{(g)}\in\mR^d$, its support is the union of the groups of nonzero latent variables. Given a DAG $\cD$ with $N$ nodes, there exist $N$ groups in $\cG$ (i.e., $N=|\cG|$). To induce a hierarchical sparsity using the LOG penalty the group corresponding to each node contains the node indices of all its ancestors, i.e., $\cG=\text{ancestors}(\cD)$. For instance, the group set for the graph in Figure~\ref{fig:simple1} is $\cG=\{s_1,s_2,\{s_1,s_2,s_3\},\{s_2,s_4\}\}$ where $s_1=\text{ancestors}(\cD;s_1)$, $s_2=\text{ancestors}(\cD;s_2)$, $\{s_1,s_2,s_3\}=\text{ancestors}(\cD;s_3)$, and $\{s_2,s_4\}=\text{ancestors}(\cD;s_4)$. Figure~\ref{fig:tree_3} shows a simple tree with three nodes, and the ancestor grouping scheme; Figure~\ref{fig:log_tree_3} shows the latent variables within the constraint in the LOG penalty.
\begin{figure}[htbp]
    \centering
    \begin{subfigure}[b]{0.4\textwidth}
        \centering
        \includegraphics[width=0.45\textwidth]{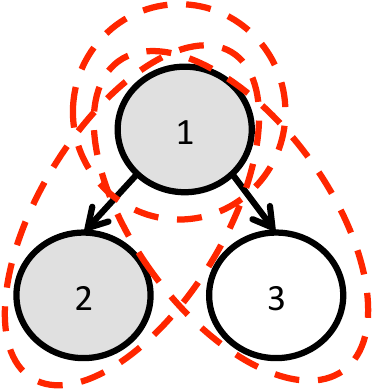}
        \caption{A tree with three nodes. Red dashed lines show the groups.}
        \label{fig:tree_3}
    \end{subfigure}
    \qquad
    \begin{subfigure}[b]{0.5\textwidth}
        \centering
        \includegraphics[width=0.8\textwidth]{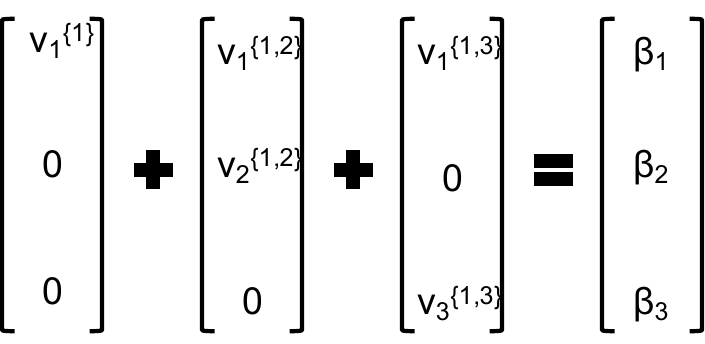}
        \caption{The constraint within the LOG penalty corresponding to the tree in \ref{fig:tree_3}}
        \label{fig:log_tree_3}
    \end{subfigure}
    \caption{LOG penalty and the required groups to induce a tree structure}\label{fig:tree_log}
\end{figure}
Recently, \citet{yan2017hierarchical} performed a detailed comparison of GL vs. LOG regularizers. They showed that compared to LOG, GL sets to zero parameters which are deeper in the hierarchy.  Hence, for DAGs with deep hierarchies it is very probable that GL sets to zero deeper variables which is undesirable. Furthermore, with LOG penalty, one has control over the solution support as it is a subset of columns of latent variables. 

In the next section, we discuss solving statistical learning problems in the regularized form using the nonsmooth LOG penalty and propose solving them using proximal methods.

\subsection{Proximal operator of the LOG penalty}
Given a hierarchical sparsity structure represented by a graph $\cD$, an approximate convex optimization  problem to \eqref{eq:main1} is 
\begin{equation}\label{eq:main_convex}
\min_{\b} \quad \big\{ \cL(\b) +\lambda\Omega_{\text{LOG}}(\b) \quad \text{s.t.} \quad \b\in\cB \big\}
\end{equation}
where $\Omega_{\text{LOG}}(.)$ is the LOG penalty introduced in \eqref{eq:LOG} with appropriately chosen groups, and $\lambda> 0$ is a parameter that controls the tradeoff between the loss function and the penalty. Indeed, problem \eqref{eq:main_convex} is a convex \emph{nonsmooth} program; hence,  proximal methods are suitable to solve large instances of this problem~\citep{nesterov2013gradient,beck2009fast,parikh2014proximal}.

Similar to gradient methods that require iterative evaluation of the gradient, proximal methods require iterative evaluation of the proximal operator~\citep{parikh2014proximal}. The proximal operator of a function $\lambda\Omega(\b)$ in general ($\lambda\Omega_{\text{LOG}}(\b)$ in this case) evaluated at $\bb\in\mR^d$ is defined as 
\begin{equation}\label{eq:prox_def}
\prox{\lambda\Omega}(\bb)\triangleq \argmin_{\b\in\mR^d}\left\{\lambda\Omega_{\text{LOG}}(\b)+\frac{1}{2}\norm{\b-\bb}_2^2\right\}.
\end{equation}
Using the definition of $\Omega_{\text{LOG}}$, evaluating the proximal operator of the LOG penalty requires solving 
\begin{equation}\label{eq:prox_LOG}
\min_{\n^{(g)}\in\mR^d} \left\{\lambda\sum_{g\in\cG} w_g\norm{\n^{(g)}}_2+\frac{1}{2}\norm{\sum_{g\in\cG}\n^{(g)}-\bb}_2^2 \ \ \text{s.t.} \ \ \n^{(g)}_{g^c}=0 ,\ \forall g\in\cG \right\}.
\end{equation}
over the latent variables $\n^{(g)},\ g\in\cG$. A classical method to solve \eqref{eq:prox_LOG} is some implementation of the proximal Block Coordinate Descent (BCD) algorithm~\citep{villa2014proximal} -- see e.g. Algorithm~\ref{alg:BCD_LOG}.
\begin{algorithm}[htbp]
\caption{Block Coordinate Descent (BCD) to solve $\prox{\lambda\Omega_{\text{LOG}}}(\bb)$}
\label{alg:BCD_LOG}
\begin{algorithmic}[1]
\REQUIRE $\bb, \lambda, \bw, \cG$
\STATE $\b=\bo$
\STATE $\n^{(g)}=0, \ \forall g\in\cG$
\WHILE{\text{stopping criterion not met}}
\FOR{$g\in\cG$}
\STATE $\b\gets\b-\n^{(g)}$
\STATE $\n^{(g)}\gets\cS_G(\bb_g-\b_g,\lambda w_g)\triangleq (\bb_g-\b_g)\max\{1-\frac{\lambda w_g} {\norm{\bb_g-\b_g}},0\}$,
\STATE $\b\gets\b+\n^{(g)}$
\ENDFOR
\ENDWHILE\\
\hspace{-0.8cm }\textbf{Output:} $\b$
\end{algorithmic}
\end{algorithm}
Provided an algorithm to evaluate $\prox{\lambda\Omega_{\text{LOG}}}$ \emph{efficiently}, one may use a proximal optimization method e.g. proximal gradient method~\cite{beck2009fast,nesterov2013gradient,parikh2014proximal} to solve \eqref{eq:main_convex}. The main challenge is to evaluate the proximal operator of the LOG penalty \eqref{eq:prox_def} for large DAGs with large $|\cG|$. The main drawbacks of the BCD algorithm to solve \eqref{eq:prox_def} are as follows. First, even though convergence of the BCD algorithm for nondiffrentiable but separable functions has been established~\cite{tseng2001convergence}, to the best of our knowledge, the convergence rate of the algorithm for nonsmooth optimization is \emph{sublinear}~\cite{tseng2009coordinate,razaviyayn2013unified}. Second, the BCD algorithm follows a Gauss-Siedel update rule and hence it cannot be parallelized. In this work, we introduce an efficient first-order method, based on Douglas-Rachford operator splitting, that can solve \eqref{eq:prox_def} over large graphs with fast, i.e., \emph{linear}, rate of convergence.

\begin{remark}
The intent of this work is to propose an efficient optimization algorithm to solve the proximal mapping of the LOG penalty \eqref{eq:prox_def}. The resulting standalone algorithm can then be embedded in any prox-based optimization algorithm to solve \eqref{eq:main1} for convex or nonconvex settings. Hence, solving \eqref{eq:main1} is not the purpose of our paper which is the reason why we do not specify any structures on $\cL(\cdot)$ or $\cB$, e.g., convexity or etc.
\end{remark}

\subsection{Contributions}
We propose an ADMM algorithm with a sharing scheme to solve $\prox{\Omega_{\text{LOG}}}$ defined over large DAGs. The underlying DAG may have any general structure (e.g. not necessary to be a path graph) and the algorithm is guaranteed to converge to its optimal solution. Furthermore, the computationally challenging subproblem of the algorithm (step 4 in Algorithm~\ref{alg:sharing_Prox_LOG}) can be run fully in parallel.

We proved linear convergence of the algorithm given a sufficiently small stepsize \emph{in the absence of strong convexity}. Establishing the linear convergence rate is based on the error bound theory (see e.g.\cite{tseng2010approximation,luo1993convergence,zhang2013linear,hong2017linear}), and our contributions are as follows:

1. The dual error bound is established in the presence of $\ell_2$-norm in the nonsmooth component of the objective function. A common key assumption in previous works \emph{requires the nonsmooth component to have a polyhedral epigraph}. Our proof shows an approach to escape such assumption and enables further extension. 

2. On the primal side, we rigorously prove the error bound for the augmented Lagrangian function. The main challenges are the presence of the dual variable $\by$ and the splitting of $\bx$ into two blocks , i.e., $\bx^1$ and $\bx^2$, which requires a number of technical details, e.g., comparing the limiting behavior of $\bx^{1,k}$ and $\bx^{2,k}$. To the best of our knowledge, this is the first work showing error bound for augmented Lagrangian function with block structured variables. 

3. As an auxiliary result needed to show the linear rate of convergence of the algorithm, and by using the proof in \cite{glowinski}, we formally show the uniform boundedness (with respect to the stepsize) of both primal and dual updates generated by the algorithm.

4. We looked into the effect of the graph structure on the $O(1)$ constant of the convergence rate; furthermore, we performed detailed numerical experiments on six different graphs comparing the proposed method against five other state-of-the-art optimization techniques for this problem.

\subsection{Related work}
As discussed above, hierarchical sparsity structures are generally enforced by either introducing constraints to the underlying optimization problem or adding regularizers to its objective function. We discuss the relevant work in the later category with convex regularizers which are directly related to our work. 

\citet{zhao2006model} proposed composite absolute penalty to express grouping and hierarchical structures and used a stage-wise lasso algorithm to approximate the regularization path for convex problems. \citet{radchenko2010variable} proposed an adaptive nonlinear interaction structure for least-square regression with two-way interaction for high-dimensional problems. \citet{schmidt2010convex} enforced the hierarchical constraints using grouped $\ell_1$ regularization with overlapping groups for log-linear models of any order and proposed an active set method to solve the underlying problem. \citet{haris2016convex} proposed a general framework for variable selection problems regularized with group lasso with overlaps and proposed to solve the underlying problem by the ADMM algorithm. \citet{jenatton2011structured} explored the relationship between the groups defining the norm and the resulting nonzero patterns and provided forward and backward algorithms to go back and forth between groups and patterns. Furthermore, \citet{jenatton2011proximal} considered finding the proximal mapping of the group lasso penalty through its dual problem and showed that a BCD algorithm solves the dual in one pass for tree graphs. \citet{mairal2011convex} showed that the proximal operator of the overlapping group lasso under $\ell_\infty$ norm can be computed in polynomial time by solving a quadratic min cost-flow problem and used proximal splitting to solve the problem in higher dimensions. \citet{she2018group} considered group lasso with overlap penalty and provide the minimax lower bounds for strong and weak hierarchical models and showed their proposed estimators enjoy sharp rate oracle inequalities. \citet{hazimeh2019learning} proposed a scalable algorithm based on proximal gradient descent to solve the underlying problem. Their method enjoys a proximal screening step that identifies many of the zero variables and groups and also allows to solve the problem in parallel; furthermore, they proposed an efficient active set method based on gradient screening. 

\citet{jacob2009group}, first, proposed the LOG penalty and studied its theoretical properties - see also \citet{obozinski2011group} for theoretical discussions on choice of the weights. \citet{villa2014proximal} proposed accelerated proximal method and proved the convergence of the overall learning problem for the least-square loss function. Furthermore, they developed an active set method to compute the inner proximal mapping relatively fast. \citet{lim2015learning} imposed strong hierarchy through a constrained optimization problem for which they found an equivalent unconstrained problem. \citet{chouldechova2015generalized} incorporated a LOG-like penalty to fit generalized additive models. Finally, \citet{yan2017hierarchical}  compared statistical properties of GL and LOG lasso penalties. They also proposed a finite-step  algorithm to compute the proximal operator of the LOG penalty for path graphs and extended it to general DAGs with an ADMM framework. However, their extension for general DAGs (1) highly depends on how DAG is decomposed into different path graphs, and (2) it lacks theoretical convergence and convergence rate analysis. Compared to their method, we formulated the proximal map for the original DAG directly into an ADMM framework with a sharing scheme and established global linear convergence of the algorithm. We compare the convergence time of our algorithm with their path-based ADMM in Section~\ref{sec:simulations}. 

\vspace{0.2cm}
\noindent\textbf{Notation.} Vectors are denoted by lowercase bold letter while matrices are denoted by uppercase letters. The identity matrix is denoted by $\bI$. Let $\cG$ be a set, then its cardinality is denoted by $|\cG|$. Given a vector $\b\in\mR^d$ and $\bg\subseteq\{1,...,d\}$, $\b_{\bg}\in\mR^{|\bg|}$ subsets $\b$ over the set $\bg$. We denote the $j$-th column of the matrix $A$ by $A_{.j}$, similarly we denote its $i$-th row by $A_{i.}$. Furthermore, similar to the vector case, if $\bg\subseteq\{1,...,n\}$, then $A_{.g}\in\mR^{m\times|g|}$ subsets $A$ over the columns indexed by $\bg$. Inner product of two vectors is defined as $\fprod{\ba,\bb}=\ba^\top\bb$.

\vspace{0.2cm}
The rest of the paper is organized as follows. In Section~\ref{sec:prox_LOG}, we propose an ADMM algorithm with the sharing scheme to evaluate the proximal operator of the LOG penalty. Section~\ref{sec:convergence} provides detailed convergence analysis of the proposed algorithm while proofs are relegated to the Appendix. Section~\ref{sec:numerical} provides some numerical simulation studies confirming our theoretical complexity bound and compares the ADMM algorithm with other methods. Furthermore, Section~\ref{sec:numerical} contains two applications that use LOG penalty to induce sparsity structure related to topic modeling and breast cancer classification. Finally, Section~\ref{sec:conclusion} provides some concluding remarks.

\vspace{0.2cm}
\section{Evaluating the proximal operator of the LOG}\label{sec:prox_LOG}
Consider the optimization problem \eqref{eq:prox_LOG} to find the proximal operator of the LOG penalty. Given a general DAG $\cD$, each iteration of the BCD algorithm requires updating $\n^{(g)},\ \forall g\in\cG$. If $|\cG|$ is a large number, then per iteration complexity of the algorithm $\cO(\sum_{g\in\cG}|g|)$ is costly. Furthermore, to the best of authors knowledge, convergence rates of the BCD algorithm for general nonsmooth optimization problems have not been well studied. However, convergence of the algorithm for problems where the nondiffrentiable part is separable is established, cf. \cite{tseng2001convergence}.  

In this paper, we develop an Alternating Direction Method of Multiplier (ADMM) to solve the proximal operator of the LOG penalty. The algorithm is parallelizable which makes it suitable when the number of groups is very large. Define $\bx=[\n^{(g)}_g]_{g\in\cG}\in\mR^n$, $n=\sum_{g\in\cG}|g|$, be a long vector that contains the nonzero elements of $\n^{(g)},\ g\in\cG$, for some random order $\cP$ of the groups. Furthermore, let $j(\cdot):\ \cG\rightarrow\{1,...,n\}$ be the set map that associates a group $g\in\cG$ to its indices in vector $\bx$ given an order of the groups. For instance, for $\cG=\{\{1\},\{1,2\},\{1,3\}\}$ ordered as $\cP$ from left-to-right: 1. $n=5$, 2. $j_{\cP}(\{1\})=\{1\}$, $j_{\cP}(\{1,2\})=\{2,3\}$, and $j_{\cP}(\{1,3\})=\{4,5\}$. To simplify the notation, the group ordering $\cP$ is omitted. Finally, \eqref{eq:prox_LOG} can equivalently be written as
\begin{equation}
\label{eq:main}
\min_{\bx\in\mR^n} \lambda\sum_{g\in\cG}\norm{W^{g}\bx}_2+\frac{1}{2}\norm{M\bx-\bb}_2^2,
\end{equation}
where $W^{g}=w_gU^{j(g)}$, $U^{j(g)}\in\mR^{|g|\times n}$ such that $[{U^{j(g)}}^\top]_{g\in\cG}=\bI\in\mR^{n\times n}$, and $M\in\mB^{d\times n}$ sums elements of $\bx$ along each coordinate. Or, equivalently, \eqref{eq:prox_LOG} can be written as
\begin{equation}
\label{eq:main_2}
\min_{\bx\in\mR^n} f(\bx)\triangleq\lambda\sum_{g\in\cG}w_g\norm{\bx_{j(g)}}_2+\frac{1}{2}\norm{M\bx-\bb}_2^2.
\end{equation}
Problem \eqref{eq:main_2} is a convex (\emph{not} strongly convex since $M$ is not a full-column rank matrix), nonsmooth optimization program. This problem can also be solved using the proximal gradient method~\cite{zhang2013linear,tseng2010approximation}. We propose to solve this problem using the Alternating Direction Method of Multipliers (ADMM). First, splitting the problem into two blocks~\citep{boyd2011distributed}, we have
\begin{equation}
\label{eq:main_split}
\min_{\bx^1,\bx^2\in\mR^n} \left\{ F(\bx^1,\bx^2)\triangleq\lambda\sum_{g\in\cG}w_g\norm{\bx^1_{j(g)}}_2+\frac{1}{2}\norm{M\bx^2-\bb}_2^2, \ \ \text{s.t.} \ \ \bx^1=\bx^2 \right\}.
\end{equation}
The augmented Lagrangian function for \eqref{eq:main_split} is 
\begin{equation}
\label{eq:augLagrange}
L_{\rho}(\bx^1,\bx^2;\by)=\lambda\sum_{g\in\cG}w_g\norm{\bx^1_{j(g)}}_2+\frac{1}{2}\norm{M\bx^2-\bb}_2^2+\fprod{\by,\bx^1-\bx^2}+\frac{\rho}{2}\norm{\bx^1-\bx^2}_2^2,
\end{equation}
where $\by\in\mR^n$ is the Lagrange multiplier for the linear constraint $\bx^1=\bx^2$, and $\rho\geq 0$ is a constant. Furthermore, the augmented dual function is given by
\begin{equation}
\label{eq:augDual}
g_{\rho}(\by)=\min_{\bx^1,\bx^2\in\mR^n} \lambda\sum_{g\in\cG}w_g\norm{\bx^1_{j(g)}}_2+\frac{1}{2}\norm{M\bx^2-\bb}_2^2+\fprod{\by,\bx^1-\bx^2}+\frac{\rho}{2}\norm{\bx^1-\bx^2}_2^2,
\end{equation}
which results into the the dual problem
\begin{equation}
\label{eq:dualProblem}
\max_{\by\in\mR^n} \ \ g_{\rho}(\by).
\end{equation}
The ADMM iterates in the unscaled form~\cite{boyd2011distributed} are 
\begin{align}
\bx_{j(g)}^{1,k+1} &\gets \argmin_{\bx^1_{j(g)}\in\mR^{|g|}} \lambda w_g\norm{\bx_{j(g)}^1}_2+\frac{\rho}{2}\norm{\bx^1_{j(g)}-\bx^{2,k}_{j(g)}+\frac{1}{\rho}\by^k_{j(g)}}_2^2,\ \ \forall g\in\cG,  \label{eq:admm_0_1} \\
\bx^{2,k+1} &\gets \argmin_{\bx^2\in\mR^n} \frac{1}{2}\norm{M\bx^2-\bb}_2^2+\frac{\rho}{2}\norm{\bx^2-\bx^{1,k+1}-\frac{1}{\rho}\by^k}_2^2, \label{eq:admm_0_2} \\
\by^{k+1}_{j(g)} & \gets \by^k_{j(g)}+\alpha\large(\bx^{1,k+1}_{j(g)}-\bx^{2,k+1}_{j(g)}\large),\ \ \forall g\in\cG,  \label{eq:admm_0_3} 
\end{align}
where $\alpha$ is the dual stepsize. Algorithm~\ref{alg:ADMM_Prox_LOG_unscaled} illustrates the resulting (unscaled) ADMM algorithm to evaluate the proximal map of the LOG penalty. Note that the subproblem \eqref{eq:admm_0_1} is parallelizable across groups, and the solution to each subproblem is available in the closed from. However, even though the update \eqref{eq:admm_0_2} has a closed-form solution, it involves inverting an $n\times n$ matrix which generally requires $\cO(n^3)$ operations, per iteration. Hence, for large DAGs where $|\cG|$ and hence $n$ is large, the second update is very slow. 
\begin{algorithm}[htbp]
\caption{ADMM to solve $\prox{\lambda\Omega_{\text{LOG}}}(\bb)$ in the unscaled form}
\label{alg:ADMM_Prox_LOG_unscaled}
\begin{algorithmic}[1]
\REQUIRE $\bb, \lambda, \alpha, w_g \ \forall g\in\cG, j(.):\cG\rightarrow[n]$
\STATE $k=0,\ \by^0=\bo,\ \bx^{2,0}=\bo$
\WHILE{\text{stopping criterion not met}}
\STATE $k\gets k+1$ \\
\STATE $\bx_{j(g)}^{1,k+1} \gets \prox{\lambda w_g\norm{\cdot}_2}(\bx^{2,k}_{j(g)}-\frac{1}{\rho}\by^k_{j(g)}),\ \ \forall g\in\cG$ \\
\STATE $\bx^{2,k+1} \gets (M^\top M+\rho\bI)^{-1}(M^\top\bb+\rho\bx^{1,k+1}+\by^k)$ \\
\STATE $\by^{k+1}_{j(g)} \gets \by^k_{j(g)}+\alpha\large(\bx^{1,k+1}_{j(g)}-\bx^{2,k+1}_{j(g)}\large),\ \ \forall g\in\cG$ \\
\ENDWHILE\\
\STATE $\b=M\bx^{1,k}$ \\
\hspace{-0.7cm }\textbf{Output:} $\b$
\end{algorithmic}
\end{algorithm}

To deal with this issue, below we propose a sharing scheme that helps solving the second subproblem efficiently. First, we put the variables in the matrix form. Define $X\in\mR^{d\times|\cG|}$ be a matrix that stacks $\n^{(g)}, \ g\in\cG$ where its columns are indexed by $g\in\cG$. Problem \eqref{eq:prox_LOG} can be written in the matrix form as
\begin{equation}
\label{eq:main_3}
\min_{X\in\mR^{d\times|\cG|}} \left\{\lambda\sum_{g\in\cG}w_g\norm{X_{.g}}_2+\frac{1}{2}\norm{\sum_{g\in\cG} X_{.g}-\bb}_2^2, \ \text{s.t.} \ (X_{.g})_{g^c}=\bo~\forall g\in\cG \right\}.
\end{equation}
Splitting the problem into two blocks, the problem is equivalent to
\begin{equation}
\label{eq:admm_2}
\min_{X^1,X^2\in\mR^{d\times|\cG|}} \left\{\lambda\sum_{g\in\cG}w_g\norm{X^1_{.g}}_2+\frac{1}{2}\norm{\sum_{g\in\cG}X^2_{.g}-\bb}_2^2, \ \text{s.t.} \ X^1=X^2, \ (X^1_{.g})_{g^c}=\bo~\forall g\in\cG \right\}.
\end{equation}
The ADMM iterates in the \emph{scaled form} (through defining $U:=(1/\rho)Y$ where $Y$ is the dual variable in the matrix form - see \cite{boyd2011distributed}  for details) to solve \eqref{eq:admm_2} are
\begin{align}
X_{.g}^{1,k+1} &\gets \argmin_{X^1_{.g}\in\mR^d} \left\{\lambda w_g\norm{X_{.g}^1}_2+\frac{\rho}{2}\norm{X^1_{.g}-X^{2,k}_{.g}+U^k_{.g}}_2^2\ \ \text{s.t.}\ (X^1_{.g})_{g^c}=\bo\right\},\ \forall g\in\cG, \label{eq:admm2_1} \\
X^{2,k+1} &\gets \argmin_{X^2\in\mR^{d\times|\cG|}} \frac{1}{2}\norm{\sum_{g\in\cG}X_{.g}^2-\bb}_2^2+\frac{\rho}{2}\sum_{g\in\cG}\norm{X^2_{.g}-X^{1,k+1}_{.g}-U^k_{.g}}_2^2,\label{eq:admm2_2}  \\ 
U^{k+1}_{.g} & \gets U^k_{.g}+(\alpha/\rho)\large(X^{1,k+1}_{.g}-X^{2,k+1}_{.g}\large),\ \forall g\in\cG, \label{eq:admm2_3}
\end{align}
Similar to the vector form, the solution to subproblem \eqref{eq:admm2_1} is provided by the proximal map of the $\ell_2$-norm and can be parallelized across groups, see step 4 in Algorithm~\ref{alg:sharing_Prox_LOG}. Subproblem \eqref{eq:admm2_2} is potentially a large problem in $d|\cG|$ variables for a large DAG (equivalent to the second update in the above algorithm in the vector form); however, it is possible to decrease its size to only $d$ variables. Subproblem \eqref{eq:admm2_2} is equivalent to
\begin{equation}
\min_{X^2\in\mR^{d\times|\cG|},~\bar{\bx}^2\in\mR^d} \left\{\frac{1}{2}\norm{~|\cG|\bar{\bx}^2-\bb}_2^2+\frac{\rho}{2}\sum_{g\in\cG}\norm{X^2_{.g}-X^{1,k+1}_{.g}-U^k_{.g}}_2^2\ \text{s.t.} \ \bar{\bx}^2=(1/|\cG|)\sum_{g\in\cG}X^2_{.g} \right\}.
\end{equation}
Minimizing over $X^2_{.g}$ with $\bar{\bx}^2$ fixed and using optimality conditions, we get 
\begin{equation}
\label{eq:xg-fixed-xbar}
X^2_{.g}=\bar{\bx}^2+X^{1,k+1}_{.g}+U^k_{.g}-(1/|\cG|)\sum_{g\in\cG}(X^{1,k+1}_{.g}+U^k_{.g}),\ \forall g\in\cG.
\end{equation}
Using \eqref{eq:xg-fixed-xbar} to solve \eqref{eq:admm2_2} we get
\begin{equation}
\label{eq:x2bar}
\bar{\bx}^2=\frac{1}{|\cG|+\rho}\Big(\bb+\frac{\rho}{|\cG|}\sum_{g\in\cG}(X^{1,k+1}_{.g}+U^k_{.g})\Big).
\end{equation}
Furthermore, using \eqref{eq:xg-fixed-xbar} in \eqref{eq:admm2_3}, we get
\begin{equation}
U_{.g}^{k+1}=U_{.g}^{k}+(\alpha/\rho)\big(\frac{1}{|\cG|}\sum_{g\in\cG}(X^{1,k+1}_{.g}+U^k_{.g})-\bar{\bx}^2-U^k_{.g}\big), \ \forall g\in\cG.
\end{equation}
The sharing implementation of the proposed ADMM algorithm is illustrated in Algorithm~\ref{alg:sharing_Prox_LOG}.
\begin{algorithm}[htbp]
\caption{ADMM to solve $\prox{\lambda\Omega_{\text{LOG}}}(\bb)$ in the scaled form with the sharing scheme}
\label{alg:sharing_Prox_LOG}
\begin{algorithmic}[1]
\REQUIRE $\bb, \lambda, \alpha, w_g \ \forall g\in\cG$
\STATE $k=0,\ \bar{\bu}^0=\bo,\ \bar{\bx}^{2,0}=\bo$
\WHILE{\text{stopping criterion not met}}
\STATE $k\gets k+1$ \\
\STATE $X_{gg}^{1,k+1} \gets \prox{\lambda w_g\norm{\cdot}_2}(X^{1,k}_{gg}+\bar{\bx}^{2,k}_{g}-\bar{\bu}^k_{g}-\bar{\bx}^{1,k}_g), \ \ \forall g\in\cG$  \\
\STATE $X_{g^cg}^{1,k+1} \gets \bo, \ \ \forall g\in\cG$
\STATE $\bar{\bx}^{1,k+1} \gets \frac{1}{|\cG|}\sum_{g\in\cG}X^{1,k+1}_{.g}$
\STATE $\bar{\bx}^{2,k+1}\gets\frac{1}{|\cG|+\rho}\big(\bb+\rho(\bar{\bx}^{1,k+1}+\bar{\bu}^k)\big)$ \\
\STATE $\bar{\bu}^{k+1}=\bar{\bu}^{k}+(\alpha/\rho)\big(\bar{\bx}^{1,k+1}-\bar{\bx}^{2,k+1}\big).$ \\
\ENDWHILE\\
\STATE $\b=\sum_{g\in\cG} X^{1,k+1}_{.g}$ \\
\hspace{-0.7cm } \textbf{Output:} $\b$
\end{algorithmic}
\end{algorithm}

\section{Convergence analysis}\label{sec:convergence}
\citet{Eckstein2012ADMM} showed the iterates generated by two-block ADMM converges to some limiting points under certain conditions. Our setting follows their proposition. It is also straightforward to show such limiting points are optimal solutions. In this section, we establish \emph{linear} convergence rate of the ADMM algorithm to solve the proximal operator of the LOG penalty \eqref{eq:main_2} given a sufficiently small stepsize using the error bound theory.

\subsection{Rate of convergence}\label{sec:rate}
Note that the objective function of \eqref{eq:main_2} is \emph{not} strongly convex since $M$ is not full column rank. To establish the linear convergence rate, we will use the error bound theory which is well-established for primal methods -- see \cite{tseng2010approximation} and references therein. For a dual method, one needs to show that both primal and dual error bounds hold for the problem under investigation. As mentioned in the contributions, showing the dual error bound in the presence of $\ell_2$-norm in the objective function (which results in a second-order cone epigraph) is not trivial. Furthermore, in the absence of a bounded feasible region, boundedness of the iterates needs to be established. Finally, establishing primal error bound in the presence of the dual variable and under 2-block splitting is elaborate.  All of these challenges are addressed in this section.

Let $\bX^*\subseteq\mR^{2n}$ and $\bY^*\subseteq\mR^n$ denote the primal and dual optimal solution sets to \eqref{eq:main_split} and \eqref{eq:dualProblem}, respectively. Let $\bX(\by)\subseteq\mR^{2n}$ denote the optimal solution set to the problem of minimizing the augmented Lagrangian function \eqref{eq:augDual} given $\by\in\mR^n$. Note the augmented Lagrangian \eqref{eq:augLagrange} is strongly convex in $\bx^1$ or $\bx^2$, but not jointly strongly convex. We use $\bx(\by)=({\bx^1}^\top(\by),{\bx^2}^\top(\by))^\top\in\bX(\by)$ to represent a minimizer of \eqref{eq:augLagrange} given $\by$. Let $E\triangleq[\bI,-\bI]\in\mR^{n\times 2n}$ and $M$ as in  \eqref{eq:main_2}, and define
\begin{equation}
\label{eq:ell_func}
\ell(\bx^1,\bx^2)\triangleq \phi_{\bb}(M\bx^2)+\psi_{\rho}(E\bx)
\end{equation}
where the functions $\phi_{\bb}:\ \mR^n\rightarrow\mR$ and $\psi_{\rho}:\ \mR^n\rightarrow\mR$ are defined as
$\phi_{\bb}(\bz) \triangleq \frac{1}{2}\norm{\bz-\bb}_2^2$ and $\psi_{\rho}(\bz)  \triangleq \frac{\rho}{2}\norm{\bz}_2^2$. For the simplicity of notation, the subscripts $\bb$ and $\rho$ are eliminated in the remainder of the manuscript. The following two properties are used in the subsequent analysis:
{\small
\begin{align}
\norm{\grad_{\bx^2}\phi(M\bx^2(\by))-\grad_{\bx^2}\phi(M\bx^2(\bar{\by}))}_2&=\norm{M^\top M(\bx^2(\by)-\bx^2(\bar{\by}))}_2\leq L_{\phi}\norm{M\bx^2(\by)-M\bx^2(\bar{\by})}_2, \label{eq:prop_phi}\\
\norm{\grad_{\bx}\psi(E\bx(\by))-\grad_{\bx}\psi(E\bx(\bar{\by}))}_2&=\rho\norm{E^\top E(\bx(\by)-\bx(\bar{\by}))}_2\leq L_{\psi}\norm{E\bx(\by)-E\bx(\bar{\by})}_2, \label{eq:prop_psi}
\end{align}
}
where $L_{\phi}=\norm{M^T}_2$ and $L_{\psi}=\rho\sqrt{2}$.

Given $\by\in\mR^n$, the optimization problem in \eqref{eq:augDual} can equivalently be written as
\begin{align}
\label{eq:lagrange_min_conic}
\begin{split}
\min_{\bx^1\in\mR^n,\bx^2\in\mR^n,\bs\in\mR^{|\cG|}} & \lambda\sum_{g\in\cG}s_g+\frac{1}{2}\norm{M\bx^2-\bb}_2^2+\fprod{\by,\bx^1-\bx^2}+\frac{\rho}{2}\norm{\bx^1-\bx^2}_2^2, \\
\text{s.t.} & \ \ w_g\norm{\bx^1_{j(g)}}_2\leq s_g, \qquad \forall g\in\cG.
\end{split}
\end{align}
The constraints in \eqref{eq:lagrange_min_conic} are indeed second-order cones $\cQ_g=\{(\bx_g,s_g)\in\mR^{|\cG|}\times\mR_+:\ w_g\norm{\bx_g}_2\leq s_g\}$. The KKT system for the problem \eqref{eq:lagrange_min_conic} is
\begin{subequations}
\label{eq:kkt}
\begin{align}
\by_{j(g)}-w_g\m_g+\rho(\bx^1_{j(g)}-\bx^2_{j(g)}) &= \bo \qquad \forall g\in\cG, \label{eq:kkt_1} \\
M^\top(M\bx^2-\bb)+\rho(\bx^2-\bx^1)-\by &= \bo, \label{eq:kkt_2} \\
\lambda-\nu_g &= 0 \qquad \forall g\in\cG, \label{eq:kkt_3} \\
w_g\norm{\bx^1_{j(g)}}_2 &\leq s_g \qquad \forall g\in\cG, \label{eq:kkt_4} \\
\norm{\m_g}_2 &\leq \nu_g \qquad \forall g\in\cG, \label{eq:kkt_5} \\
\m_g^\top\bx^1_{j(g)}+\frac{s_g}{w_g}\nu_g &= \bo \qquad \forall g\in\cG, \label{eq:kkt_6}
\end{align}
\end{subequations}
where $(\m_g,\nu_g)\in\mR^{|\cG|}\times\mR_+$ is the dual variable for the conic constraint. Hence, a pair $\big((\bx^1,\bx^2,\bs),(\m,\n)\big)$ is an optimal primal-dual pair if it satisfies \eqref{eq:kkt}. Using the KKT conditions, first we provide the dual error bound in Lemma~\ref{lem:dual_err_bound}. 

\begin{lemma}\label{lem:dual_err_bound}
There exists $\tau_d>0$ such that
\begin{equation}
\text{dist}(\by,Y^*)\leq \tau_d\norm{\grad g_{\rho}(\by)}_2,
\end{equation}
where $g_{\rho}(\by)$ is the augmented dual function defined in \eqref{eq:augDual}. 
\end{lemma}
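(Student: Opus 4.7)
Since the quadratic penalty $\rho>0$ makes $L_{\rho}$ strongly convex in each of the blocks $\bx^1$ and $\bx^2$ separately, a Danskin/envelope argument applied to \eqref{eq:augDual} yields that $g_{\rho}$ is differentiable with
$\nabla g_{\rho}(\by)=\bx^1(\by)-\bx^2(\by)=E\bx(\by).$
The lemma is therefore equivalent to the primal-residual error bound $\text{dist}(\by,Y^*)\le\tau_d\,\|E\bx(\by)\|_2$, and throughout I fix $\by$, set $\hat{\by}:=\Pi_{Y^*}(\by)$, and write $\bx^*:=\bx(\hat{\by})$, so that $E\bx^*=\bo$ by primal feasibility of any dual optimum.

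\textbf{KKT reduction.} Applying \eqref{eq:kkt_1} at both $\by$ and $\hat{\by}$ and subtracting group-wise gives $\by_{j(g)}-\hat{\by}_{j(g)}=w_g(\m_g-\m_g^*)-\rho(E\bx)_{j(g)}$, whence $\|\by-\hat{\by}\|\le w_{\max}\|\m-\m^*\|+\rho\|E\bx\|$, so the task reduces to bounding the \emph{multiplier gap} $\|\m-\m^*\|$ by a constant multiple of $\|E\bx\|$. The same manipulation applied to \eqref{eq:kkt_2} produces the auxiliary identity $\by-\hat{\by}=M^{\top}M(\bx^2-\bx^{2,*})-\rho E\bx$, and the closed-form first-block minimizer $\bx^1_{j(g)}=\prox{\lambda w_g\|\cdot\|_2/\rho}(\bx^2_{j(g)}-\by_{j(g)}/\rho)$, together with nonexpansiveness of $\prox{}$, yields $\|\bx^1-\bx^{1,*}\|\le\|\bx^2-\bx^{2,*}\|+\|\by-\hat{\by}\|/\rho$. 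A direct check shows that these three quadratic-portion inequalities alone cannot close the loop, because the constants collide in a degenerate identity traceable to $\sigma(\|M\|^2+\rho)=1$ with $\sigma=1/(\|M\|^2+\rho)$; the non-polyhedral subdifferential structure must therefore genuinely enter.

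\textbf{Main obstacle and resolution.} The crux, and the exact place where the Luo-Tseng polyhedral-epigraph assumption fails, is the control of $\|\m-\m^*\|$ given that the multipliers live on the second-order cones $\cQ_g$. I would partition the groups into the active set $\mathcal{A}^*:=\{g:\bx^{1,*}_{j(g)}\ne\bo\}$ and its complement. For $g\in\mathcal{A}^*$, the conic complementarity \eqref{eq:kkt_6} together with $\nu_g=\lambda$ from \eqref{eq:kkt_3} pins down $\m_g^*=-\lambda\bx^{1,*}_{j(g)}/\|\bx^{1,*}_{j(g)}\|_2$ uniquely; by continuity of $\bx(\by)$ the same formula holds for $\m_g$ once $\by$ is close enough to $Y^*$, and the unit-direction map $\bu\mapsto\bu/\|\bu\|_2$, being Lipschitz on the region $\{\|\bu\|_2\ge\tfrac{1}{2}\|\bx^{1,*}_{j(g)}\|_2\}$, yields $\|\m_g-\m_g^*\|\le C_g\|\bx^1_{j(g)}-\bx^{1,*}_{j(g)}\|$. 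For $g\notin\mathcal{A}^*$, where the conic complementarity no longer pins $\m_g^*$ down, I would read $w_g\m_g$ and $w_g\m_g^*$ directly off \eqref{eq:kkt_1} as $\by_{j(g)}+\rho(E\bx)_{j(g)}$ and $\hat{\by}_{j(g)}$ respectively, thereby pulling the ambiguity back into the very difference we are trying to bound and closing the loop. Combining the two cases with the three inequalities of the previous paragraph and solving out for $\|\by-\hat{\by}\|$ then produces the claimed bound with an explicit $\tau_d=\tau_d(\rho,\lambda,\bw,M)$. I expect the hard part to be precisely this SOC step: Lipschitz behavior of the normal cone on the active manifold is what lets us sidestep the polyhedral-epigraph assumption, but upgrading the bound from local (near $Y^*$) to uniform in $\by$ will require an additional global compactness/contradiction argument to rule out active-set switching far from the optimum.
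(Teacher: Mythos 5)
Your proposal heads in a genuinely different direction from the paper, and as written it has gaps I do not believe can be closed without essentially switching to the paper's argument. The paper never bounds the multiplier gap $\norm{\m-\m^*}$ and performs no active-set decomposition. It combines two global facts: from \eqref{eq:kkt_2}, $\norm{\by-\by^*}_2^2\leq L_{\phi}^2\norm{M\bx^2(\by)-M\bx^2(\by^*)}_2^2+L_{\psi}^2\norm{E\bx(\by)-E\bx(\by^*)}_2^2$; and, expanding $\norm{M\bx^2(\by)-M\bx^2(\by^*)}_2^2+\rho\norm{E\bx(\by)-E\bx(\by^*)}_2^2$ as an inner product of gradient differences of $\ell$ and substituting \eqref{eq:kkt_1}--\eqref{eq:kkt_2}, this quantity equals $\sum_{g\in\cG}\fprod{w_g\m_g(\by)-w_g\m_g(\by^*),\bx^1_{j(g)}(\by)-\bx^1_{j(g)}(\by^*)}$ plus a term bounded by $\norm{\by-\by^*}_2\norm{\grad g_{\rho}(\by)}_2$ via Cauchy--Schwarz. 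The first sum is shown to be \emph{nonpositive} using only the conic complementarity \eqref{eq:kkt_3}--\eqref{eq:kkt_6}; this monotonicity inequality is exactly how the paper escapes the polyhedral-epigraph assumption, globally and with no case analysis. Chaining the two displays gives $\norm{\by-\by^*}_2^2\leq\max\{L_{\phi}^2,L_{\psi}^2/\rho\}\norm{\by-\by^*}_2\norm{\grad g_{\rho}(\by)}_2$, and the lemma follows.

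The concrete gaps in your route are these. (i) Your system of inequalities does not close: you need $\norm{\bx^2(\by)-\bx^2(\hat{\by})}_2$ controlled by $\norm{E\bx(\by)}_2$, but the identity $\by-\hat{\by}=M^{\top}M(\bx^2-\bx^{2,*})-\rho E\bx$ cannot be inverted because $M$ is not full column rank, and none of your three quadratic-portion inequalities supplies such a bound; moreover, solving out for $\norm{\by-\hat{\by}}_2$ requires the self-referential coefficient $w_{\max}C/\rho$ to be strictly below one, while your $C_g$ scales like $\lambda/\norm{\bx^{1,*}_{j(g)}}_2$ and is not controlled. (ii) The Lipschitz unit-direction argument on the active set presumes $\bx(\by)$ is a continuous single-valued selection with $\bx^1_{j(g)}(\by)\neq\bo$ near $Y^*$; since the augmented Lagrangian is not jointly strongly convex, $\bX(\by)$ need not be a singleton and this needs proof. (iii) The lemma is a global statement over all $\by\in\mR^n$, an unbounded set, and $Y^*$ itself is not assumed compact; your bound is local near $Y^*$ and the promised compactness/contradiction globalization is not available for free, whereas the paper's monotonicity argument is global from the outset.
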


\begin{proof}
Check Appendix~\ref{app:proof_lem_dual_bound}.
\end{proof}


From the proof of Lemma~\ref{lem:dual_err_bound}, we see that $\tau_d$ is indeed a xfunction of $\rho$ and $M$. Next, in Lemma~\ref{lem:boundedness} below, we show the existence of a finite saddle point to the augmented Lagrangian function and that the sequence generated by the algorithm is uniformly bounded.

\begin{lemma}\label{lem:boundedness}
Given the existence of a finite saddle point to the augmented Lagrangian function \eqref{eq:augLagrange}, for any $\rho$ and $\alpha$ such that $0<\alpha<\rho$, the sequence $\{\bx^{1,k}\}$, $\{\bx^{2,k}\}$ and $\{\by^k\}$ generated by the algorithm \eqref{eq:admm_0_1}-\eqref{eq:admm_0_3} is uniformly bounded. 
\end{lemma}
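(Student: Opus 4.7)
The plan is to adapt the classical Glowinski-style Lyapunov argument for two-block ADMM with dual stepsize $\alpha \in (0,\rho)$. Let $(\bx^{1,*},\bx^{2,*},\by^*)$ be a finite saddle point of $L_\rho$, so that $\bx^{1,*}=\bx^{2,*}$ and the saddle-point optimality conditions read $-\by^* \in \partial f_1(\bx^{1,*})$ and $\nabla f_2(\bx^{2,*}) = \by^*$, where $f_1(\bx^1):=\lambda\sum_g w_g\|\bx^1_{j(g)}\|_2$ and $f_2(\bx^2):=\frac{1}{2}\|M\bx^2-\bb\|_2^2$. The first-order optimality of the block updates \eqref{eq:admm_0_1}-\eqref{eq:admm_0_2} yields a subgradient $\xi^{k+1}\in\partial f_1(\bx^{1,k+1})$ with $\xi^{k+1} = -\by^k - \rho(\bx^{1,k+1}-\bx^{2,k})$ and the equation $\nabla f_2(\bx^{2,k+1}) = \by^k - \rho(\bx^{2,k+1}-\bx^{1,k+1})$.

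The core step is to introduce the candidate Lyapunov function
\begin{equation*}
V^k := \tfrac{1}{\alpha}\|\by^k-\by^*\|_2^2 + \rho\|\bx^{2,k}-\bx^{2,*}\|_2^2
\end{equation*}
and prove that there exist constants $c_1,c_2>0$, depending on $\rho-\alpha$, with
\begin{equation*}
V^{k+1} \;\leq\; V^k \;-\; c_1 \|\bx^{1,k+1}-\bx^{2,k+1}\|_2^2 \;-\; c_2 \|\bx^{2,k+1}-\bx^{2,k}\|_2^2.
\end{equation*}
To obtain this inequality I would expand $\|\by^{k+1}-\by^*\|_2^2$ using $\by^{k+1}=\by^k+\alpha(\bx^{1,k+1}-\bx^{2,k+1})$, expand $\|\bx^{2,k+1}-\bx^{2,*}\|_2^2$ around $\bx^{2,k}$, and then combine with two monotonicity inequalities: subtracting the saddle-point conditions from the per-iteration optimality conditions and using convexity of $f_1$ and $f_2$ gives
\begin{align*}
\langle \bx^{1,k+1}-\bx^{1,*},\, -\by^k - \rho(\bx^{1,k+1}-\bx^{2,k}) + \by^*\rangle &\geq 0,\\
\langle \bx^{2,k+1}-\bx^{2,*},\, \by^k - \rho(\bx^{2,k+1}-\bx^{1,k+1}) - \by^*\rangle &\geq \|M(\bx^{2,k+1}-\bx^{2,*})\|_2^2.
\end{align*}
Adding the two, rearranging, and substituting $\bx^{1,k+1}-\bx^{2,k+1} = \frac{1}{\alpha}(\by^{k+1}-\by^k)$ will produce a chain of identities whose cross-terms can be absorbed into the descent terms provided $\alpha<\rho$; this is precisely where the stepsize restriction enters, ensuring the residual coefficient $\rho-\alpha$ remains strictly positive.

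Monotonicity of $V^k$ gives $V^k\le V^0$ for all $k$, hence uniform boundedness of $\{\by^k\}$ and $\{\bx^{2,k}\}$. The third sequence is then handled by observing that \eqref{eq:admm_0_1} is exactly $\bx^{1,k+1}_{j(g)} = \mathbf{prox}_{(\lambda w_g/\rho)\|\cdot\|_2}(\bx^{2,k}_{j(g)} - \by^k_{j(g)}/\rho)$; since the proximal map of a norm is nonexpansive and vanishes at the origin, we get $\|\bx^{1,k+1}_{j(g)}\|_2 \leq \|\bx^{2,k}_{j(g)} - \by^k_{j(g)}/\rho\|_2$, so boundedness of $\{\bx^{2,k}\}$ and $\{\by^k\}$ transfers immediately to $\{\bx^{1,k}\}$.

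The main obstacle will be the bookkeeping in the second paragraph: deriving the inequality for $V^{k+1}-V^k$ involves several cross-terms of the form $\langle \by^{k+1}-\by^k,\,\bx^{2,k+1}-\bx^{2,k}\rangle$ and $\langle \bx^{1,k+1}-\bx^{2,k+1},\,\bx^{2,k+1}-\bx^{2,*}\rangle$, and one must complete the square carefully so that the residual coefficients remain nonnegative under the sole hypothesis $0<\alpha<\rho$ (rather than the stronger $\alpha\leq\rho$ typically used in the symmetric case). Getting these constants right while only using convexity of $f_1$ (no strong convexity available) and Lipschitzness of $\nabla f_2$ through its monotonicity estimate is the delicate part; aside from this algebra the remaining steps are routine.
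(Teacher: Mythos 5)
Your plan is sound and, in its key algebraic step, actually takes a cleaner route than the paper. The paper also runs a Glowinski-style Fej\'er argument, but its Lyapunov function carries a third term, $\alpha(\rho-\alpha)\|\bx^{1,k}-\bx^{2,k}\|_2^2$: after reaching the inequality $\langle\tilde{\by}^k,\tilde{\bx}^{1,k+1}-\tilde{\bx}^{2,k+1}\rangle\le-\|M\tilde{\bx}^{2,k+1}\|_2^2-\rho\|\tilde{\bx}^{1,k+1}-\tilde{\bx}^{2,k+1}\|_2^2+\rho\langle\tilde{\bx}^{2,k}-\tilde{\bx}^{2,k+1},\tilde{\bx}^{1,k+1}\rangle$ (the same one your two monotonicity inequalities yield), the paper decomposes $\tilde{\bx}^{1,k+1}$ through $\tilde{\bx}^{1,k}$ and $\tilde{\bx}^{2,k}$, which forces an auxiliary consecutive-iterate monotonicity estimate and leaves a residual $\|\tilde{\bx}^{1,k}-\tilde{\bx}^{2,k}\|_2^2$ that must be telescoped by augmenting $V^k$. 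If you instead split $\tilde{\bx}^{1,k+1}=\tilde{\bx}^{2,k+1}+(\tilde{\bx}^{1,k+1}-\tilde{\bx}^{2,k+1})$, the cross term becomes $\tfrac{1}{2}(\|\tilde{\bx}^{2,k}\|_2^2-\|\tilde{\bx}^{2,k+1}\|_2^2-\|\tilde{\bx}^{2,k}-\tilde{\bx}^{2,k+1}\|_2^2)+\langle\tilde{\bx}^{2,k}-\tilde{\bx}^{2,k+1},\bx^{1,k+1}-\bx^{2,k+1}\rangle$, and a Young inequality with weight $\epsilon\in[1,2-\alpha/\rho)$ gives exactly your claimed descent with coefficient $\alpha(\alpha-2\rho+\rho\epsilon)<0$ on $\|\bx^{1,k+1}-\bx^{2,k+1}\|_2^2$; your two-term $V^k$ then suffices (note $\epsilon=1$ gives $c_2=0$, which is all you need for boundedness, while $c_2>0$ requires $\epsilon$ strictly between $1$ and $2-\alpha/\rho$). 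Your second departure is also valid and simpler than the paper's: the paper extracts boundedness of $\{\bx^{1,k}\}$ from the third Lyapunov term, whereas you get it directly from nonexpansiveness of $\mathbf{prox}_{(\lambda w_g/\rho)\|\cdot\|_2}$ and the fact that it fixes the origin. One remark: the lemma takes existence of a finite saddle point as a hypothesis, but the paper's proof begins by establishing it (via coercivity, Weierstrass, and strong duality); you may keep it as an assumption consistent with the statement, but be aware that the paper discharges it.
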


\begin{proof}
Check Appendix~\ref{app:proof_lem_boundedness}.
\end{proof}


We also need to establish the primal error bound. Unlike the dual error bound, where the gradient of the augmented $g_{\rho}(\by)$ nicely bounds the ``error", i.e., the distance of a point to the optimal solution set, the primal function is not smooth and differentiable. Quantifying the error bound for nonsmooth functions is generally performed by the \emph{proximal gradient}. For general surveys on error bounds, see \cite{pang1997,zhou2019} and references therein.

\begin{definition}\label{def:prox_grad}
Assume a convex function $f$ is decomposable as $f(\bx)=g(A\bx)+h(\bx)$, where $g$ is a strongly convex and differentiable function and $h$ is a convex (possibly nonsmooth) function, then
we can define the proximal gradient of $f$ with respect to $h$ as $$\quad \tilde{\nabla} f(\bx) :=\bx-\operatorname{prox}_{h}(\bx-\nabla(f(\bx)-h(\bx)))=\bx-\operatorname{prox}_{h}\left(\bx-A^\top \nabla g(A\bx)\right)$$
\end{definition}
If $h=0$ then the proximal gradient $\tilde{\nabla} f(\bx)$ is equal to the gradient $\nabla f(\bx)$. In general, $\tilde{\nabla} f(\bx)$ can be used as the (extended) gradient for nonsmooth minimization $\min_{\bx \in \mR^n} f(\bx) .$ For instance, we have $\tilde{\nabla} f\left(\bx^{*}\right)=\bo$
if and only if $\bx^{*}$ is a minimizer. For the Lagrangian function \eqref{eq:augLagrange}, the proximal gradient w.r.t. $\bx=(\bx^1,\bx^2)$ is defined as
{\small
\begin{equation*}
\tilde{\nabla}_{\bx} L_{\rho}(\bx^1,\bx^2;\by):=\bx-\operatorname{prox}_{\lambda\sum_{g\in\cG}w_g\norm{\bx^1_{j(g)}}_2}\left(\bx-\nabla_{\bx}(\frac{1}{2}\norm{M\bx^2-\bb}_2^2+\fprod{\by,\bx^1-\bx^2}+\frac{\rho}{2}\norm{\bx^1-\bx^2}_2^2)\right),
\end{equation*}
}
which we split into $\tilde{\nabla}\bx^1 L_{\rho}$ and $\tilde{\nabla}\bx^2 L_{\rho}$ in the proof of Lemma~\ref{lem:primal_err_bound}.
\begin{lemma}\label{lem:primal_err_bound}
Assume that $(\bx^1,\bx^2;\by)$ is in a compact set, then there exist $0<\tau_p<+\infty$ and $\delta>0$ such that
\begin{equation}\label{primal_error}
\text{dist}(\bx,\bX(\by))\leq \tau_p\norm{\tilde{\nabla}_{x} L_{\rho}(\bx^1,\bx^2;\by)}_2,
\end{equation}
for all $(\bx^1,\bx^2;\by)$ such that $\norm{\tilde{\nabla}_{x} L_{\rho}(\bx^1,\bx^2;\by)}_2\leq \delta$. Furthermore, $\tau_p$ and $\delta$ are independent of $\by$.
\end{lemma}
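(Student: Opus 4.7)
The plan is a Luo--Tseng-style proof by contradiction that exploits the strong convexity of the ``smooth part'' of the augmented Lagrangian on the image of a linear map, together with the two-block KKT system \eqref{eq:kkt_1}--\eqref{eq:kkt_6}. I would begin by decomposing
\[
L_{\rho}(\bx^1,\bx^2;\by)=G_{\by}(A\bx)+h(\bx^1),\qquad A\bx\triangleq (M\bx^2,\bx^1-\bx^2),
\]
where $G_{\by}(\bu,\bv)\triangleq \frac{1}{2}\|\bu-\bb\|_2^2+\fprod{\by,\bv}+\frac{\rho}{2}\|\bv\|_2^2$ is strongly convex and smooth in $(\bu,\bv)$ with modulus $\min\{1,\rho\}$ and Lipschitz constant that do not depend on $\by$, and $h(\bx^1)\triangleq \lambda\sum_{g\in\cG}w_g\|\bx^1_{j(g)}\|_2$. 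A standard midpoint/strong-convexity argument then shows that $A\bx^*$ takes a common value across $\bX(\by)$; denote its two components by $\bu^*(\by)$ and $\bv^*(\by)$, both continuous in $\by$ on the compact set by Berge's maximum theorem applied to the strongly convex reduced problem.

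Assume for contradiction that the bound fails. Then one can extract sequences $\{(\bx^{1,k},\bx^{2,k},\by^k)\}$ in the compact set and $\epsilon_k\downarrow 0$ such that $\|\tilde{\nabla}_{\bx} L_{\rho}(\bx^{1,k},\bx^{2,k};\by^k)\|_2\leq \epsilon_k$ but $\operatorname{dist}\big((\bx^{1,k},\bx^{2,k}),\bX(\by^k)\big)\geq k\epsilon_k$. Pass to a subsequence along which $(\bx^k,\by^k)\to(\bar{\bx},\bar{\by})$; continuity of the proximal mapping forces $\tilde{\nabla}_{\bx} L_{\rho}(\bar{\bx};\bar{\by})=\bo$, so $\bar{\bx}\in\bX(\bar{\by})$ and in particular $M\bar{\bx}^2=\bu^*(\bar{\by})$, $\bar{\bx}^1-\bar{\bx}^2=\bv^*(\bar{\by})$. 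Because $\nabla_{\bx^2} L_{\rho}=M^\top(M\bx^2-\bb)-\by-\rho(\bx^1-\bx^2)$ equals the $\bx^2$-block of the proximal gradient exactly, subtracting \eqref{eq:kkt_2} evaluated at any $\bx^{*,k}\in\bX(\by^k)$ and invoking strong convexity of $G_{\by^k}$ on $\text{range}(A)$ yields the quantitative estimate
\[
\|M\bx^{2,k}-\bu^*(\by^k)\|_2^2+\rho\|(\bx^{1,k}-\bx^{2,k})-\bv^*(\by^k)\|_2^2=O(\epsilon_k).
\]

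The $\bx^1$-block of the proximal gradient is in turn an approximate version of the group-lasso subgradient condition \eqref{eq:kkt_1} with residual $O(\epsilon_k)$. Combining the two closeness results, I would then construct an explicit $\bx^{*,k}\in\bX(\by^k)$ within distance $O(\epsilon_k)$ of $(\bx^{1,k},\bx^{2,k})$, contradicting the assumed growth. Independence of $\tau$ and $\delta$ from $\by$ is automatic, since every constant appearing depends only on $M$, $\rho$, the compact set, and the continuity of $(\bu^*,\bv^*)$.

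The main obstacle will be this last construction: any minimizer is pinned to an affine subspace parallel to $\ker M$ \emph{and} to the group-lasso subdifferential at $\by^k$, whose conic (not polyhedral) structure precludes a direct Hoffman bound. One must therefore split into \emph{active} groups, where smoothness of $\bz\mapsto \bz/\|\bz\|_2$ away from $\bo$ permits a quantitative inversion of \eqref{eq:kkt_1}, \eqref{eq:kkt_3}, \eqref{eq:kkt_6}, and \emph{inactive} groups, where a uniform strict-interior margin inside \eqref{eq:kkt_5} is extracted from compactness; since the active/inactive pattern at $\by^k$ may differ from that at $\bar{\by}$, reconciling this mismatch uniformly in $k$, and comparing the limiting behavior of $\bx^{1,k}$ versus $\bx^{2,k}$ as the authors preview, is the crux of the argument.
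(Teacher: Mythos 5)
Your proposal sets up the same Luo--Tseng contradiction framework that the paper uses --- the decomposition of $L_\rho$ into a strongly convex function of $A\bx=(M\bx^2,\bx^1-\bx^2)$ plus the group norm $h(\bx^1)$, the invariance of $A\bx$ over $\bX(\by)$, and the first estimate $\norm{A\bx^k-A\bx^*}^2=O(\epsilon_k)$ from strong convexity and monotonicity of $\partial h$ --- and your handling of uniformity in $\by$ (letting $\by^k$ vary along the contradiction sequence) is arguably cleaner than the paper's a posteriori continuity argument. But the proof is not complete: everything you defer to ``the main obstacle'' is precisely the content of the paper's proof. The missing step is the local Hoffman-type bound
\begin{equation*}
\norm{\bx^{2,k}-\bar{\bx}^{2,k}}_2\leq\kappa\,\norm{M\bx^{2,k}-M\bar{\bx}^{2,k}}_2
\end{equation*}
along the contradiction sequence (the paper's \eqref{key}), without which control of $A\bx^k$ does not translate into control of $\text{dist}(\bx^k,\bX(\by^k))$: the estimate $\norm{A\bx^k-A\bx^*}=O(\sqrt{\epsilon_k})$ says nothing about the component of $\bx^{2,k}-\bar{\bx}^{2,k}$ lying in $\ker M$, and that component is exactly what the non-polyhedral group-norm subdifferential must pin down. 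The paper proves this claim by a second contradiction: extract a unit direction $\bar{\bu}$ with $M\bar{\bu}=\bo$, perturb the projection to $\hat{\bx}^{2,k}=\bar{\bx}^{2,k}+\epsilon\bar{\bu}$, and show via a three-case analysis on each group (inside the ball $\norm{\bx^{1,k}_{j(g)}-\bt^k_{j(g)}}_2\leq\lambda w_g$; outside with $\bar{\bx}^{1,k}_{j(g)}\neq\bo$; outside with $\bar{\bx}^{1,k}_{j(g)}=\bo$) that $\bar{\bu}_{j(g)}$ is an appropriate scalar multiple of $\bar{\bt}_{j(g)}$, so that $\hat{\bx}^{2,k}$ remains optimal yet is strictly closer to $\bx^{2,k}$ than its projection --- a contradiction. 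That analysis occupies the entire appendix, requires Taylor expansions of $\bz\mapsto\norm{\bz}_2^{-1}$ and a comparison of the rates of $\bx^{1,k}-\bar{\bx}^{1,k}$ versus $\bx^{2,k}-\bar{\bx}^{2,k}$; your one-sentence sketch of active/inactive splitting names the right ingredients but does not supply the argument.

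A second, smaller divergence: you propose to finish by constructing an explicit point of $\bX(\by^k)$ within distance $O(\epsilon_k)$ of $\bx^k$. The paper never does this, and for good reason --- exhibiting an exact solution of the conic KKT system \eqref{eq:kkt} near a given approximate solution is essentially as hard as the error bound itself. Instead, once \eqref{key} is available, the paper combines the two variational inequalities satisfied by $\br^{1,k}$ and by $\bo$ (the latter at $\bar{\bx}^{1,k}$) into a single quadratic inequality in $\norm{\bx^{2,k}-\bar{\bx}^{2,k}}_2$ whose positive root is $O(\norm{\br^{1,k}}+\norm{\br^{2,k}})$, contradicting $\norm{\br^{1,k}}+\norm{\br^{2,k}}=o(\delta^k)$ directly; it also reduces the full distance $\text{dist}(\bx,\bX(\by))$ to $\text{dist}(\bx^2,\bX^2(\by))$ beforehand via the exact linear relation coming from the smooth $\bx^2$-block. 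To turn your plan into a proof you would need to replace the explicit-construction step with such a quadratic-inequality argument and, above all, prove the Hoffman-type claim.
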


\begin{proof}
Check Appendix~\ref{app:proof_lem_primal_bound}.
\end{proof}

\begin{remark}
In Lemma~\ref{lem:primal_err_bound}, the condition $\norm{\tilde{\nabla}_{x} L_{\rho}(\bx^1,\bx^2;\by)}_2\leq \delta$ can be relaxed. Note that $\frac{\text{dist}(\bx,\bX(\by))}{\norm{\tilde{\nabla}_{x} L_{\rho}(\bx^1,\bx^2;\by)}_2}$ is a continuous and well-defined function for all $(\bx^1,\bx^2;\by)$ such that $\norm{\tilde{\nabla}_{x} L_{\rho}(\bx^1,\bx^2;\by)}_2$ $\geq\delta$. From the uniform boundedness of the sequence in Lemma~\ref{lem:boundedness}, it implies that there exists an upper bound $\tau$ such that $\frac{\text{dist}(\bx,\bX(\by))}{\norm{\tilde{\nabla}_{x} L_{\rho}(\bx^1,\bx^2;\by)}_2}\leq\tau$, for all $(\bx^1,\bx^2;\by)$ where $\norm{\tilde{\nabla}_{x} L_{\rho}(\bx^1,\bx^2;\by)}_2\geq\delta$. Choosing $\tau_p$ to be the maximum of $\tau_p$ (in \eqref{primal_error}) and $\tau$, we have $\text{dist}(\bx,\bX(\by))\leq \tau_p\norm{\tilde{\nabla}_{x} L_{\rho}(\bx^1,\bx^2;\by)}_2$ for all $(\bx^1,\bx^2;\by)$.
\end{remark}

\begin{theorem}\label{thm:main}
Let $\{(\bx^k,\by^k)\}$ be the sequence generated by Algorithm \ref{alg:sharing_Prox_LOG} with stepsize $\alpha\leq\frac{\rho}{2\tau_p^2\sigma^2}$, where $\rho$ is the augmented Lagrangian parameter, $\tau_p$ is the primal error bound parameter and $\sigma$ is defined in the proof. Furthermore, let $\Delta^k_p=L_\rho(\bx^{k+1}; \by^k)-g_\rho(\by^k)$ and $\Delta^k_d=g_{\rho}^{*}-g_{\rho}(\by^k)$ be the primal and dual optimality gaps at the k-th iteration, respectively. Then, we have  $$[\Delta^k_p+\Delta^k_d]\leq (\frac{1}{\lambda+1})^k[\Delta^0_p+\Delta^0_d],$$
where $\lambda=\min\{\frac{\rho-2\alpha\tau_p^2\sigma^2}{\zeta+\zeta'\tau_p^2\sigma^2},\frac{\alpha}{\tau'}\}>0$, and $\sigma$, $\zeta$, $\zeta'$, $\tau'$ are defined in the proof, which shows that the sequence $[\Delta^k_p+\Delta^k_d]$ converges to zero Q-linearly\footnote{``Q" stands for Quotient. A sequence $\{\Delta^k\}$ converges Q-linearly to $\bar{\Delta}$ for a given norm $\norm{\cdot}$ if $\norm{\Delta^{k+1}-\bar{\Delta}}/\norm{\Delta^k-\bar{\Delta}}\leq\mu$ for all $k$, for some $\mu\in(0,1)$.}. 
\end{theorem}

\begin{proof}
Check Appendix~\ref{app:main_thm}.
\end{proof}

\begin{remark}\label{rem:rate_1}
Theorem~\ref{thm:main} proves that with the stepsize small enough such that  $\alpha\leq0.5\rho\tau_p^{-2}\sigma^{-2}$, we have $\Delta^k_p+\Delta^k_d\leq\epsilon$ after $k\geq(\log(1+\lambda))^{-1}\log(\frac{\Delta^0_p+\Delta^0_d}{\epsilon})$ iterates, where $\lambda$ is defined in the statement of the theorem, i.e., an $\epsilon$-optimal solution is obtained in $k\geq O(\log(1/\epsilon))$ iterates.
\end{remark}

\begin{remark}\label{rem:rate_2}
If the stepsize $\alpha$ is small enough such that $\alpha\leq0.5\rho\tau_p^{-2}\sigma^{-2}$ and $\frac{\rho-2\alpha\tau_p^2\sigma^2}{\zeta+\zeta'\tau_p^2\sigma^2} > \frac{\alpha}{\tau'}$, then $\lambda=\alpha/\tau'=\alpha\rho/\tau_d^2$. Hence, the $O(1)$ constant in the convergence rate of the algorithm would be $(\log(1+\alpha\rho/\tau_d^2))^{-1}$. Note that this scenario happens if the stepsize $\alpha$ is small enough. In this scenario, since $\tau_d=\max\{\norm{M^\top}^2,2\rho\}$ (see the proof of Theorem~\ref{thm:main}), if $\rho<(1/2)\norm{M^\top}^2$, then the $O(1)$ constant is $(\log(1+\alpha\rho/\norm{M^\top}^4))^{-1}$ which is an increasing function of $\norm{M^\top}$. Furthermore, since $\norm{M}_F/d\leq\norm{M^\top}$ and given the binary structure of $M$, $\norm{M}_F=m$ where $m$ is the number of nonzero elements of $M$. Hence, the $O(1)$ constant is larger (i.e, worst-case convergence is slower but still linear) when $m$ is larger. This mainly happens for longer DAGs compared to wider ones (given equal number of nodes), i.e., those with more ancestry structures. Furthermore, in the same scenario, smaller $\rho$ makes the constant larger, and the convergence slower. Otherwise, if $\rho\geq(1/2)\norm{M^\top}^2$, then the $O(1)$ constant is $(\log(1+\alpha/(4\rho)))^{-1}$ which gets larger, i.e. convergence is slower, for larger values of $\rho$. Finally, in both scenarios, bigger stepsize $\alpha$ (up to the linear rate upper bound of $0.5\rho\tau_p^{-2}\sigma^{-2}$) makes the constant smaller and, the worst-case convergence faster. 
\end{remark}

\section{Numerical experiments}\label{sec:numerical}
\vspace{0.2cm}
\subsection{Simulation studies} \label{sec:simulations}
This section provides our numerical studies on the performance of the proposed algorithm to evaluate the proximal operator of the LOG penalty. We compare the convergence rate of the proposed ADMM algorithm with the sharing scheme, i.e. Algorithm~\ref{alg:sharing_Prox_LOG}, with five other other algorithms including the Cyclic Block Coordinate Descent (C-BCD), shown in Algorithm~\ref{alg:BCD_LOG}, and its randomized version (R-BCD) (see \cite{richtarik2014iteration}), Proximal Gradient Descent (PGM) with backtracking (which is the ISTA algorithm in \cite{beck2009fast}), Accelerated PGM (ACC-PGM) with backtracking (which is the FISTA algorithm in \cite{beck2009fast}), and Hierarchical Sparse Modeling (HSM) by \citet{yan2017hierarchical} to find the proximal mapping of the LOG penalty for the six DAGs shown in Figure~\ref{fig:DAGs} on simulated data.

From the six different graphs shown in Figure~\ref{fig:DAGs}, four DAGs are indeed tree graphs with different structures with 101, 101, 127, 201 nodes (DAGs (a)-(c) and DAG (e)); one reverse binary tree (DAG (d)), and one random DAG with 100 nodes and 98 edges (DAG (e)) are also considered in the study. Note that each node represents a single parameter, i.e. $d=N$.

In each simulation $\bb \in \mathbb{R}^d$ is sampled from $\cN(\bo,\bI_d)$. Considering problem \eqref{eq:main_2}, the parameters are set as $\lambda=0.1$ and $w_g=|g|^{1/2}$. The step sizes of the PGM and ACC-PGM methods are selected by backtracking. The $\rho$ parameter in the ADMM algorithm is set to be a number between 1 and 20 and $\alpha$ is set to 1. All simulations in this section are run on a laptop with 2.4 GHz Intel Core i9 CPU and 32 GB memory using only one thread. The simulation is replicated 10 times and the convergence plots are obtained over their averages. Figures~\ref{fig:converge_simulation} and \ref{fig:con} show the relative error of the objective function versus iteration and time, respectively. The optimal objective function value $f^*$ is taken to be the minimum objective function value of the converging solution over the six different methods. Note that the C-BCD and R-BCD methods are not included in Figure~\ref{fig:converge_simulation}, as the notion of iteration for block coordinate methods is different and not comparable with the other methods. All of the implementations and the corresponding codes are available at \url{https://github.com/samdavanloo/ProxLOG}.

As we can see in Figure~\ref{fig:converge_simulation}, Algorithm~\ref{alg:sharing_Prox_LOG} shows linear convergence for all graphs which matches our theoretical upper bound in Section~\ref{sec:rate}. We want to reiterate that the objective function of the LOG penalty, i.e. \eqref{eq:main_2}, is \emph{not} strongly convex, but the algorithm still converges linearly. The HSM algorithm converges in finite-steps for DAG (b) which matches the theory proposed in their paper \citep{yan2017hierarchical}, given the path structure of this graph. However, HSM's convergence becomes slower as the graph grows in width, e.g., DAG (a), where ADMM and ACC-PGM are the fastest methods. As expected, ACC-PGM converges faster compared to PGM for all graphs. We also note that ADMM achieves the best objective function value $f^*$ in most of the experiments for all of the graphs.


With respect to the time, as shown in Figure~\ref{fig:con}, we compare the convergence speed of all six methods for the six graphs shown in Figure~\ref{fig:DAGs}. R-BCD algorithm has the fastest convergence for DAGs (a) and (c), which are instances of two wide graphs, and ADMM and ACC-PGM are at the second place. For DAG (b), which is a tree with two long path graphs, after HSM which provably converges in finite steps, C-BCD and ADMM are at the second place. In the asymmetric DAG (e), C-BCD decreases the function values fast at the beginning; however, its convergence becomes slower (maybe sublinear), while ADMM continues its linear convergence trend. In the random DAG (e), C-BCD is the fastest algorithm and ADMM is at the second place. We should note that BCD algorithms cannot be parallelized given the sequential nature of their computations; hence, they cannot benefit from parallel computing resources. In general, across different graph topologies, ADMM has the most robust performance from the convergence speed perspective; furthermore, in many instances, it produces the best final (minimum) objective function value.

\begin{figure}[htbp]
    \centering
    \begin{adjustbox}{minipage=\linewidth,scale=0.8}
    \begin{subfigure}[b]{0.4\textwidth}
        \centering
        \includegraphics[width=1\textwidth]{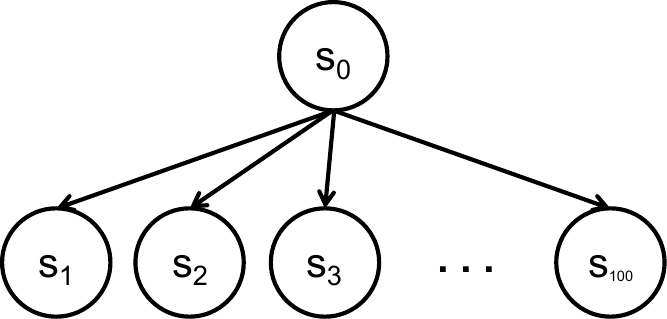}
        \mbox{} \\
        \mbox{}
        \caption{Two-layer tree, $d=101$}
         \label{fig:1to100}
    \end{subfigure}
    \hfill
    \begin{subfigure}[b]{0.4\textwidth}
         \centering
         \includegraphics[width=0.55\textwidth]{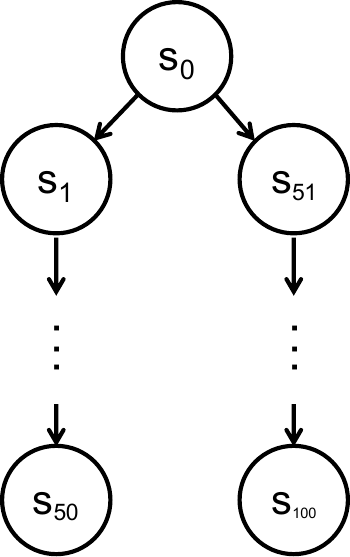}
         \caption{One root two paths tree, $d=101$}
         \label{fig:two_path}
     \end{subfigure}    
      \hfill
    \begin{subfigure}[b]{0.4\textwidth}
        \centering
        \includegraphics[width=1\textwidth]{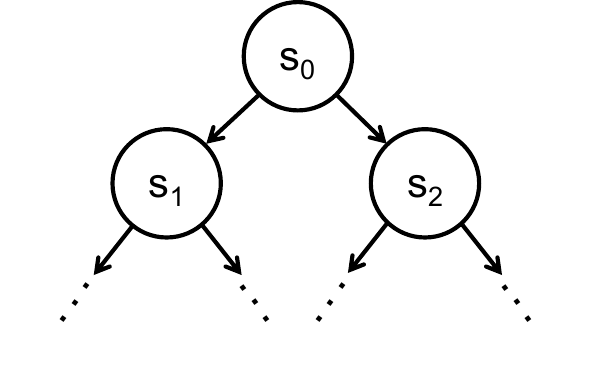}
         \caption{Binary tree, $d=127$}
         \label{fig:binary}
    \end{subfigure}
    \hfill
    \begin{subfigure}[b]{0.4\textwidth}
        \centering
        \includegraphics[width=1\textwidth]{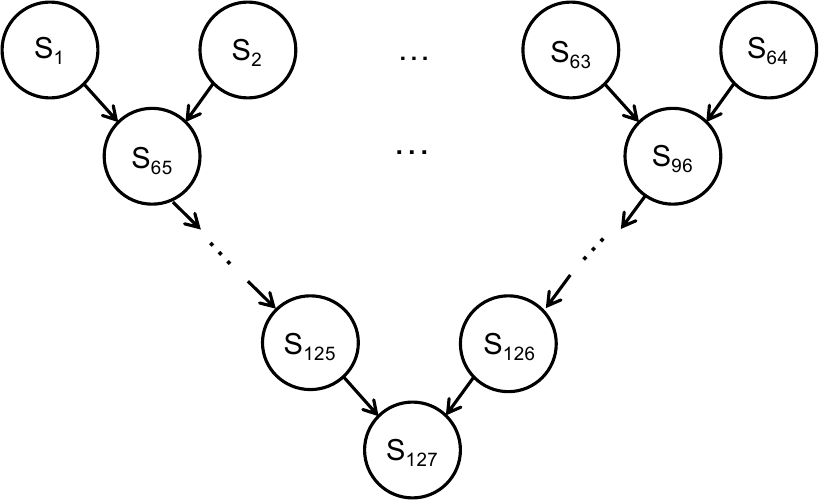}
         \caption{Reverse binary tree, $d=127$}
         \label{fig:binary}
    \end{subfigure}
    \hfill
          \begin{subfigure}[b]{0.4\textwidth}
         \centering
         \includegraphics[width=1\textwidth]{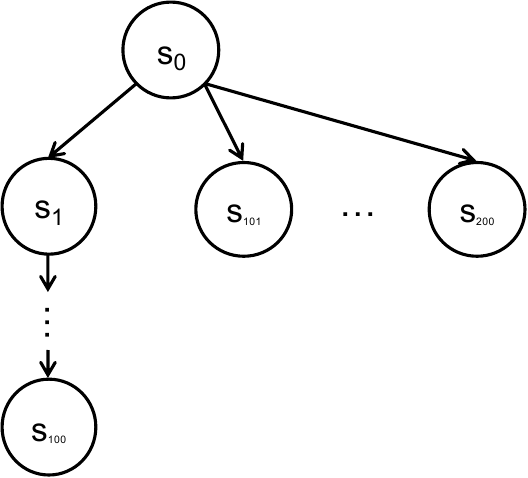}
         \mbox{}
         \caption{Asymmetric tree, $d=201$}
         \label{fig:asym}
     \end{subfigure}
     \hfill
       \begin{subfigure}[b]{0.4\textwidth}
         \centering
    	 \includegraphics[width=1.2\textwidth]{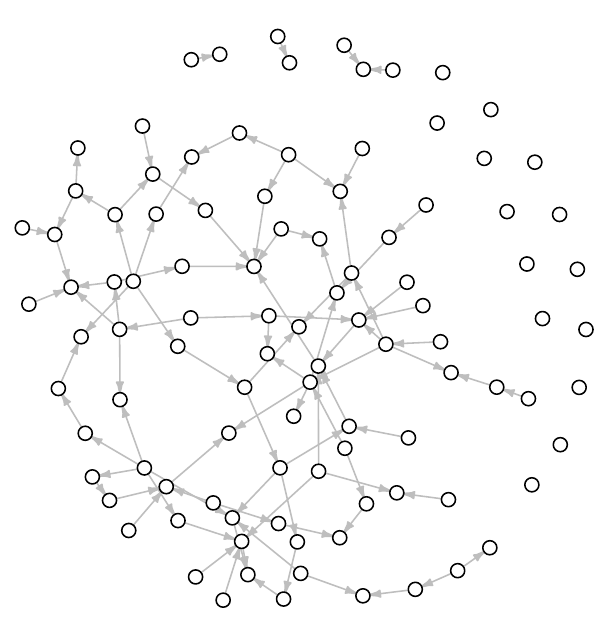}
         \caption{Random DAG, $d=100$}
         \label{fig:DAG40}
     \end{subfigure}
     \end{adjustbox}
        \caption{Six different DAGs considered in the simulation study}
        \label{fig:DAGs}
\end{figure}

\begin{figure}[htbp]
    \centering
    \begin{adjustbox}{minipage=\linewidth,scale=0.8}
    \begin{subfigure}[b]{0.45\textwidth}
        \centering
        \includegraphics[width=1\textwidth]{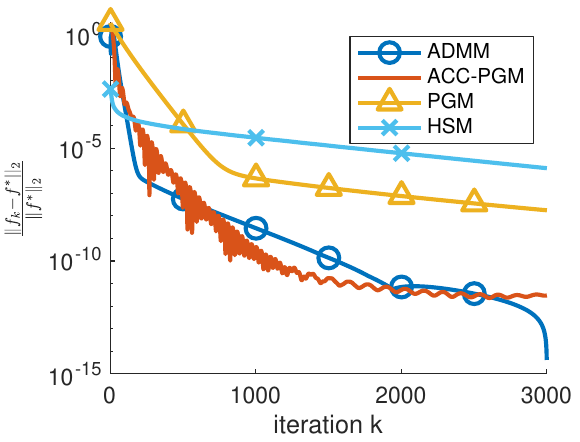}
        \caption{Two-layer tree, $d=101$}
         \label{fig:1to100_res}
    \end{subfigure}
    \hfill
    \begin{subfigure}[b]{0.45\textwidth}
         \centering
         \includegraphics[width=1\textwidth]{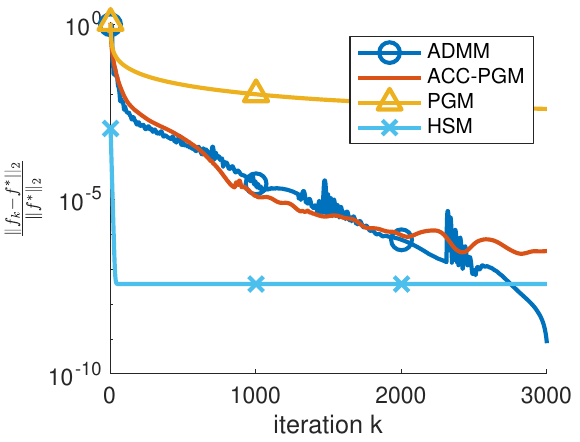}
         \caption{One root two paths tree, $d=101$}
         \label{fig:twopath_res}
     \end{subfigure}
      \hfill
    \begin{subfigure}[b]{0.45\textwidth}
        \centering
        \includegraphics[width=1\textwidth]{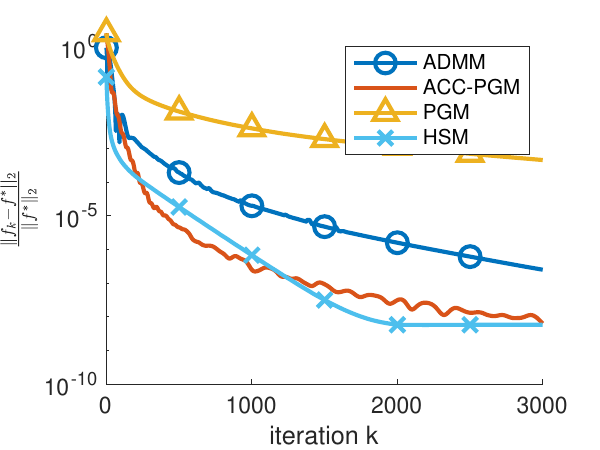}
         \caption{Binary tree, $d=127$}
         \label{fig:binary_res}
    \end{subfigure}
     \hfill
    \begin{subfigure}[b]{0.45\textwidth}
        \centering
        \includegraphics[width=1\textwidth]{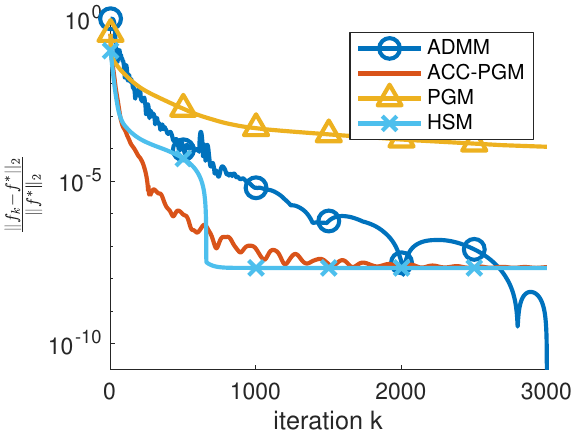}
         \caption{Reverse binary tree, $d=127$}
         \label{fig:rev_binary_res}
    \end{subfigure}
     \hfill
      \begin{subfigure}[b]{0.45\textwidth}
         \centering
         \includegraphics[width=1\textwidth]{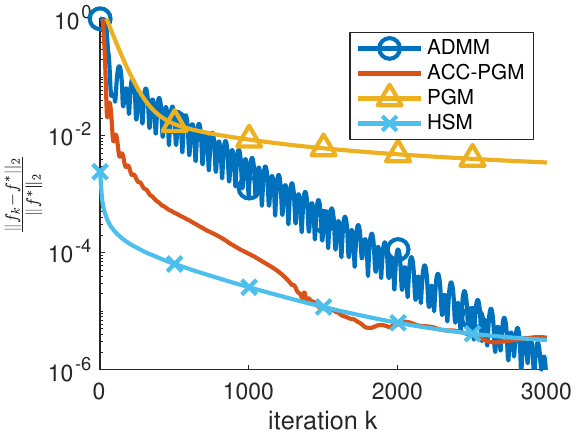}
         \caption{Asymmetric tree, $d=201$}
         \label{fig:asym_res}
     \end{subfigure}
    \hfill
     \centering
      \begin{subfigure}[b]{0.45\textwidth}
         \centering
         \includegraphics[width=1\textwidth]{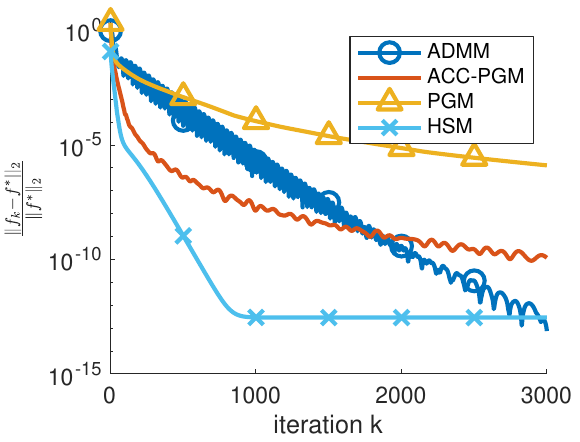}
         \caption{Random DAG, $d=100$}
         \label{fig:randomDAG100_res}
     \end{subfigure}
     
     \end{adjustbox}
        \caption{Convergence of the proposed ADMM, Proximal Gradient Method (PGM), and Accelerated PGM (ACC-PGM), Hierarchical Sparse Modeling (HSM) algorithms versus iteration for the six DAGs in Figure~\ref{fig:DAGs}.}
        \label{fig:converge_simulation}    
\end{figure}

\begin{figure}[htbp]
    \centering
    \begin{adjustbox}{minipage=\linewidth,scale=0.8}
    \begin{subfigure}[b]{0.45\textwidth}
        \centering
        \includegraphics[width=1\textwidth]{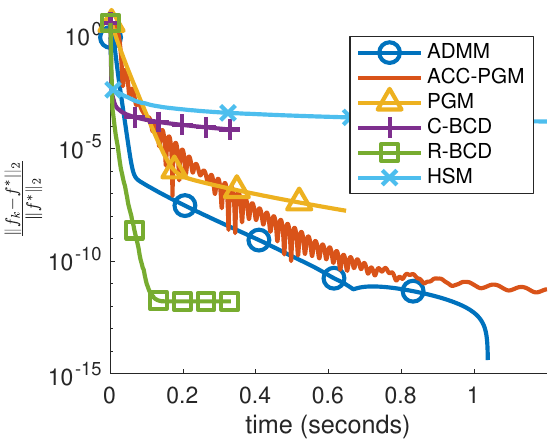}
        \caption{Two-layer tree, $d=101$}
         \label{fig:1to100_res}
    \end{subfigure}
   \hfill
    \begin{subfigure}[b]{0.45\textwidth}
         \centering
         \includegraphics[width=1\textwidth]{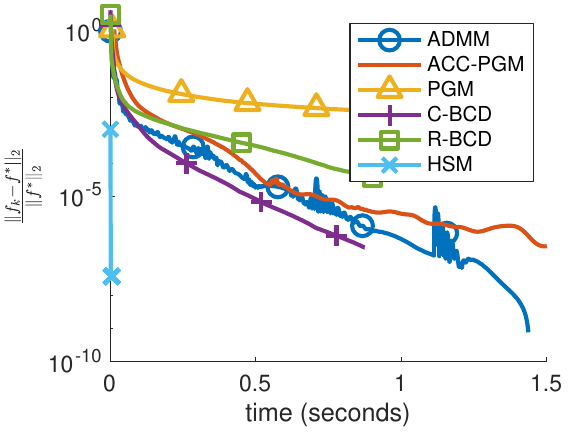}
         \caption{One root two paths tree, $d=101$}
         \label{fig:twopath_res}
     \end{subfigure}
      \hfill
    \begin{subfigure}[b]{0.45\textwidth}
        \centering
        \includegraphics[width=1\textwidth]{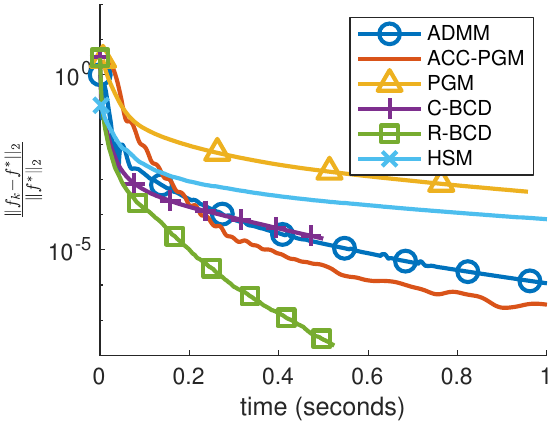}
         \caption{Binary tree, $d=127$}
         \label{fig:binary_res}
    \end{subfigure}
     \hfill
    \begin{subfigure}[b]{0.45\textwidth}
        \centering
        \includegraphics[width=1\textwidth]{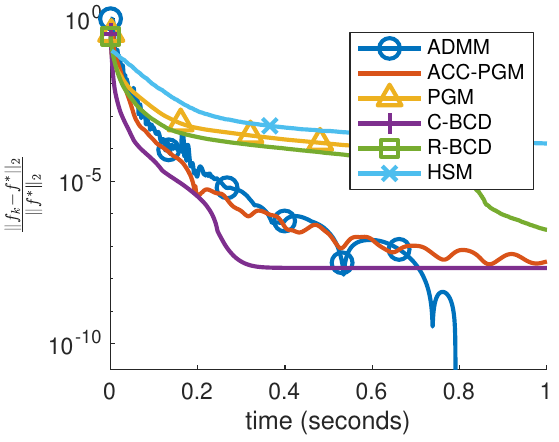}
         \caption{Reverse binary tree, $d=127$}
         \label{fig:rev_binary_res}
    \end{subfigure}
      \hfill
      \begin{subfigure}[b]{0.45\textwidth}
         \centering
         \includegraphics[width=1\textwidth]{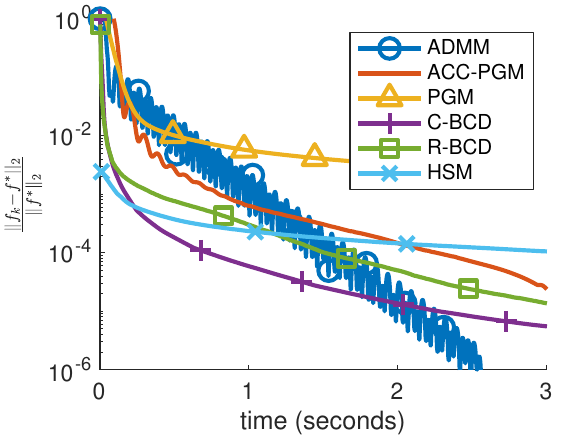}
         \caption{Asymmetric tree, $d=201$}
         \label{fig:asym_res}
     \end{subfigure}
     \hfill
     \centering
      \begin{subfigure}[b]{0.45\textwidth}
         \centering
         \includegraphics[width=1\textwidth]{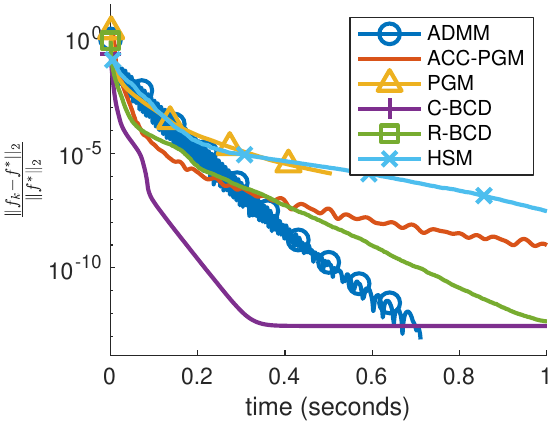}
         \caption{Random DAG, $d=100$}
         \label{fig:randomDAG100_res}
     \end{subfigure}
     \end{adjustbox}
        \caption{Convergence of the proposed ADMM, Proximal Gradient Method (PGM), Accelerated PGM (ACC-PGM), Cyclic BCD (C-BCD), Randomized BCD (R-BCD), and Hierarchical Sparse Modeling (HSM) algorithms over time for the six DAGs shown in Figure~\ref{fig:DAGs}.}
        \label{fig:con}  
\end{figure}

\subsection{Two Applications}\label{sec:app}
The proposed algorithm allows efficient evaluation of the proximal operator of the LOG penalty. Evaluating the proximal operator is generally needed iteratively within a master optimization algorithm that tries to solve an underlying statistical learning problem. Note that the master problem might be a convex or nonconvex optimization problem. To demonstrate practicality of the proposed algorithm, in this section, we consider two statistical learning problems on topic modeling and classification. The topic modeling application is a dictionary learning problem for NeurIPS proceedings. The second application relates to a breast cancer classification problem using gene expression data. 
 
\subsubsection{Topic modeling of NeurIPS proceedings.}\label{sec:topic}
We are interested in solving the topic modeling problem represented as the dictionary learning problem \eqref{eq:dic_learn} penalized with the LOG penalty. Introduction of the LOG penalty is to force the resulting topics to form a tree structure~\citep{jenatton2011proximal}. The underlying statistical learning problem can be written as
\begin{equation}\label{eq:dic_learn}
	\min_{\bD \in D_1^+, \bA \in \mathbb{R}_+^{k \times n}}\sum_{j=1}^{n} \left[ \frac{1}{2}\Vert \bx^j - \bD \alpha^j \Vert _2^2 + \lambda \Omega_{\text{LOG}}(\alpha^j) \right]
\end{equation}
where $\bX = [\bx^1, \bx^2, \cdots , \bx^n] \in \mathbb{R}^{m\times n}$ represents frequencies of $m$ words in $n$ articles and the $i$-th element of $\bx^j$ is the frequency of the $i$-th word in the $j$-th article. $\bD = [\bd^1,\bd^2,\cdots, \bd^k] \in  D_1^+$ is the dictionary of $k$ topics to be learnt where $D_1^+\triangleq \{\bD \in \mathbb{R}_+^{m \times k}:~\norm{\bd^j}_1 \leq 1, j =1,2,\cdots, k\}$.
 Furthermore, $\bA\triangleq [\alpha^1, \alpha^2, \cdots, \alpha^n] \in \mathbb{R}^{k\times n}_+ $ is the corresponding coefficients for each article such that $\bx^j \approx \bD \alpha^j$. 

Following the framework of \cite{jenatton2011proximal}, we solve \eqref{eq:dic_learn} using an alternating minimization scheme, i.e., updating $\bD$ and $\bA$ one at a time while keeping the other one fixed. The $\bD$ update is performed using  C-BCD algorithm, taking its columns as the blocks, using the algorithm of \cite{mairal2010online}. The $\bA$ update is performed by the accelerated proximal gradient method ACC-PGM~\citep{beck2009fast}. To evaluate the proximal operator of the LOG penalty, we implemented the proposed ADMM (with and without parallelization), R-BCD, and C-BCD algorithms. Given that the number of groups for this application is $|\cG|=13$, the first block update of the (parallel) ADMM (lines 4 and 5 of Algorithm~\ref{alg:sharing_Prox_LOG}) for each $\alpha^j$ is parallelized over 13 processing nodes. We also included unparallelized ADMM algorithm for comparison. Note that BCD algorithms cannot be parallelized.

These three nested algorithms are implemented for the NeurIPS proceedings from 1996 to 2015~\citep{perrone2016poisson}. The dataset contains $n=1846$ articles with $m = 11463$ words that excludes stop words and words occurring less than 50 times. We set $k=13$, $\lambda = 2^{-15}$, and followed the hierarchical structure proposed by \cite{jenatton2011proximal} to induce a tree of topics - see Figure~\ref{fig:topic}. The experiment is run on a cluster with 2.4GHz CPU and 128GB memory using 28 threads. Note that the columns of the $\bA$ matrix, i.e. $\alpha^j$, can be updated in parallel over $n=1846$ articles for all three methods.

Figure \ref{fig:topic_convergence} shows the convergence behavior of the algorithms discussed above using the norm of the proximal gradient (see Definition~\ref{def:prox_grad}) and $(1/n)\norm{A^k-A^{k-1}}_F+(1/m)\norm{D^k-D^{k-1}}_F$ as two convergence measures. Evaluating the proximal operator of the columns of the $A$ matrix using the C-BCD and parallelized ADMM are the two fastest methods, but the quality of the C-BCD solution seems to be better. Even though the number of groups is relatively small $|\cG|=13$, parallelization is significantly reducing the convergence time. Reduction of the convergence time by parallelization of ADMM will even be more significant when the number of groups is bigger -- see the application in Section~\ref{sec:breast_cancer}.
\begin{figure}[htbp]
	\vspace{-2cm}
        \centering
        \includegraphics[width=0.8\textwidth]{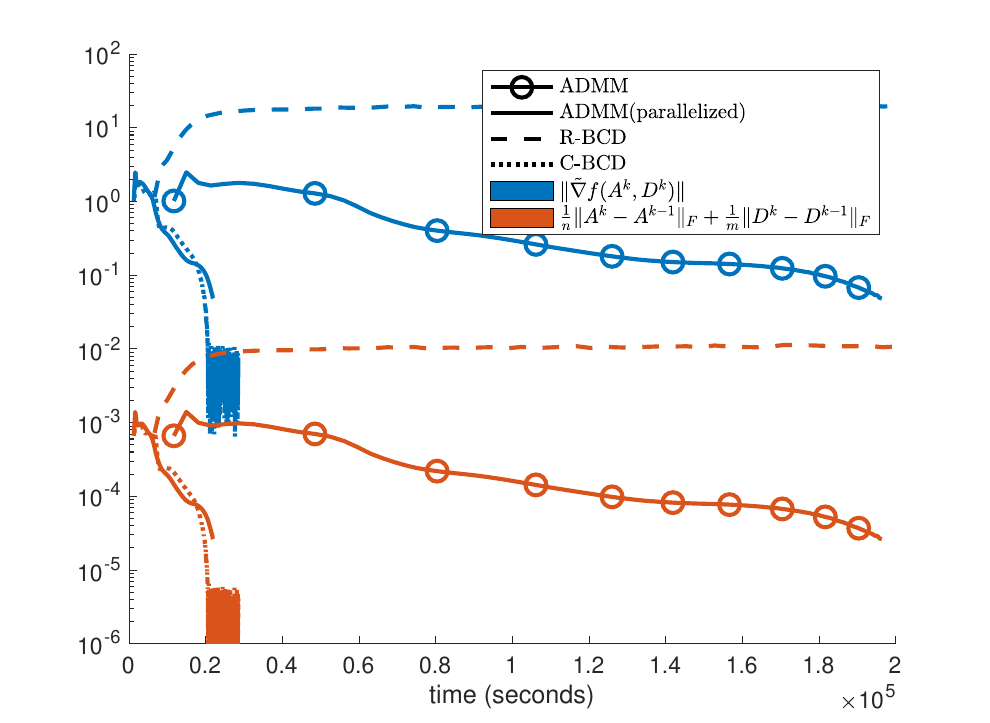}
    	\caption{Convergence of the algorithm for the topic modeling application on two different convergence measures. The $A$-update is performed by ACC-PGM algorithm where its proximal operator is evaluated by the ADMM (with/without parallelization), R-BCD, and C-BCD methods.}
   	 \label{fig:topic_convergence}
\end{figure}
Figure~\ref{fig:topic} depicts the learnt hierarchal topics with the 7 most frequent words. The root is a general topic while the leafs are more specific and narrower topics.
\begin{figure}[h]
  \centering
  \includegraphics[width=0.8\linewidth]{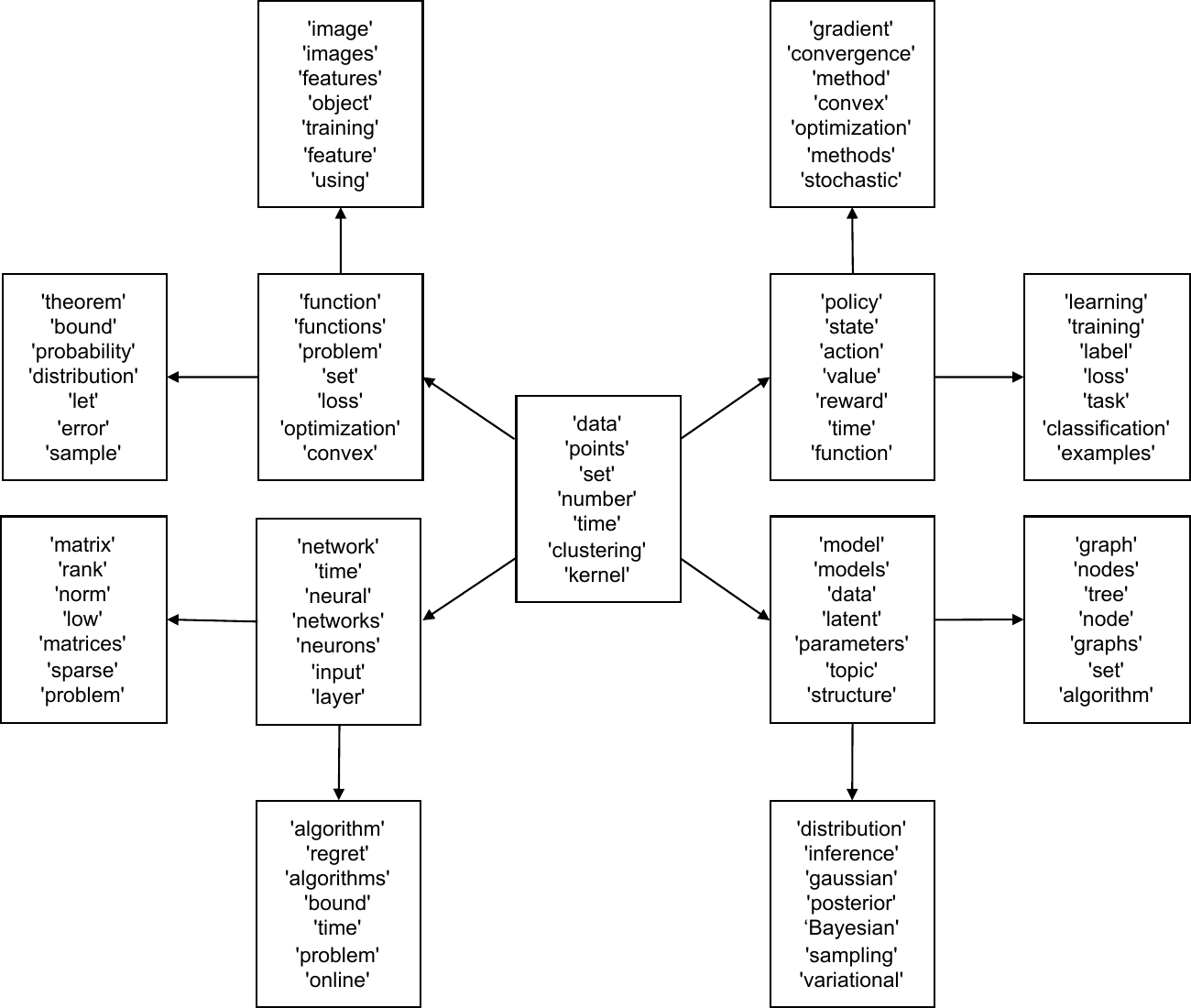}
  \caption{Hierarchical topics of NeurIPS from 1996 to 2015}
  \label{fig:topic}
\end{figure}

\subsubsection{Breast cancer classification.}\label{sec:breast_cancer}
This section discusses fitting a logistic regression model penalized with the LOG penalty to classify breast cancer  based on gene expression levels. It is known that genes functionalities are highly affected by their two-way interactions which might be a priori known based on a protein-protein network. Hence, to identify contributing genes for cancer metastasis, it is important to consider such structures.

We use the breast cancer dataset of \cite{van2002gene} that consists of 8141 gene expression data for 78 metastatic and 217 non-metastatic patients. Following the experimental settings in \cite{obozinski2011group}, we build groups of genes based on the protein-protein network of \cite{chuang2007network}. Every two genes connected directly by an edge in the network are assigned as a group. The total number of groups for this application is $|\cG|=522$. Given that groups have overlaps on many nodes, LOG penalty is used to capture the relationship within groups. The genes that are not contained in the network are eliminated and the 500 most correlated genes are selected.

The learning problem involves minimizing the logistic loss function regularized with the LOG penalty that can be written as
\begin{equation} \label{eq:logistic_LOG}
 \min_{\th} f(\th)=-\frac{1}{m}\sum_{i=1}^{m} \{y^{(i)} \log h_\theta (\bx^{(i)}) +(1-y^{(i)})\log(1-h_\theta(\bx^{(i)}))\} +\lambda \Omega_{\text{LOG}}(\th),
\end{equation}
where $(\bx^{(i)}, y^{(i)})$ is the input data, $y^{(i)}$ is either 0 or 1, and $h_\theta(\bx)\triangleq \frac{1}{1+e^{-\th^T \bx}}$. The underlying learning problem is solved by ACC-PGM algorithm while the proximal operator of the LOG penalty is evaluated by the proposed ADMM (with/without parallelization), R-BCD, and C-BCD algorithms. The $\lambda$ parameter is set equal to $10^{-3}$. The experiment is run on a cluster with 2.4GHz CPU and 128GB memory using 28 threads.

Validation of the classification performance with the LOG penalty for such a problem is performed e.g. in \cite{obozinski2011group}; so, we only focus on the convergence behavior of the proposed algorithm. The left plot in Figure~\ref{fig:cancer} shows the convergence of ACC-PGM with different proximal evaluators. While the ADMM method without parallelization is faster than R-BCD and C-BCD methods, parallelization of its first block over the available 28 nodes significantly decrease its convergence time. Note that such parallelization theoretically reduces the time further up to 522 nodes which is the number of groups $|\cG|$.


We also examine the effect of the LOG penalty for gene selection. For visual convenience,  we increase $\lambda$ to from $0.001$ to $0.05$ to make the regression coefficients sparser and evaluate the relationships of selected and unselected genes. The right plot in Figure~\ref{fig:cancer} is a subset of the network of 500 genes. Each node represents a gene and the edges are known a priori from the protein-protein network. Nonzero coefficients in the final model identify genes which are correlated with breast cancer metastasis. From this result, it is clear that connected genes are prone to be selected simultaneously which supports the rationality of the LOG penalty for this application.

\begin{figure}[htbp]
    \centering
         \includegraphics[width=0.48\textwidth]{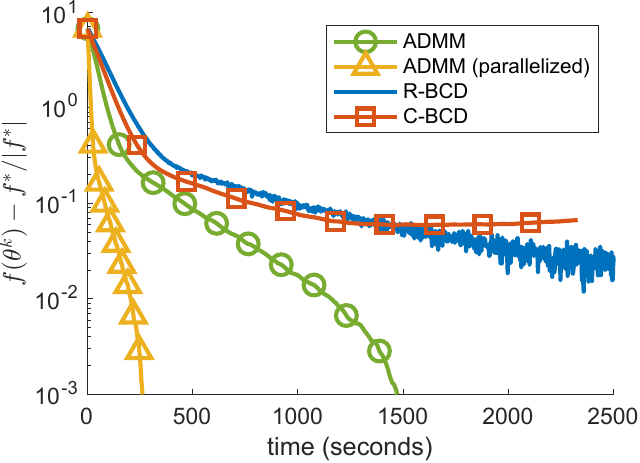}
    \hfill
         \includegraphics[width=0.50\linewidth]{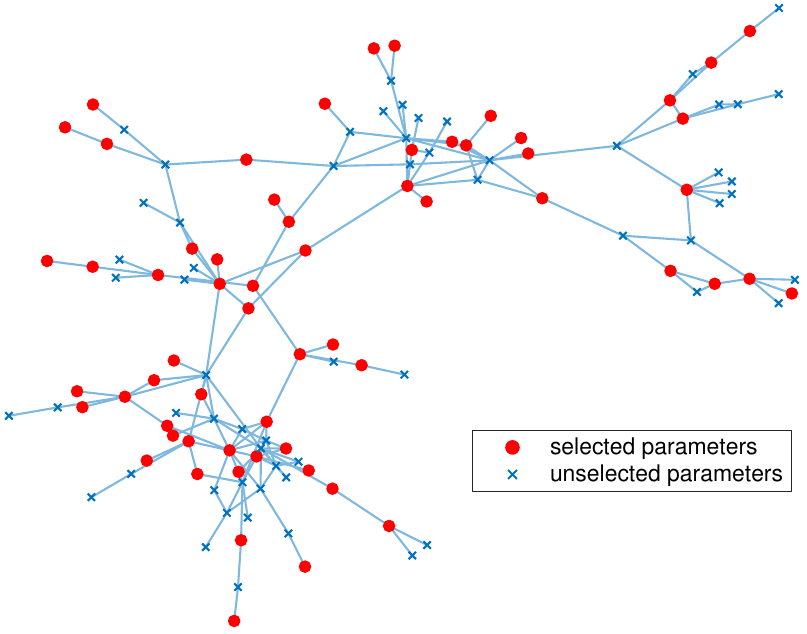}
    \caption{Breast cancer classification: \textbf{(Left)} Convergence of the ACC-PGM method where the proximal map is evaluated using ADMM (with/without parallelization), R-BCD, and C-BCD methods.\textbf{(Right)} Part of the gene network: red nodes are nonzero genes while blue nodes are zero ones. Edge structures are known a priori from the protein-protein network - see~\cite{chuang2007network}.}
        \label{fig:cancer}
\end{figure}

\section{Concluding remarks}\label{sec:conclusion}
The paper discusses an efficient algorithm to find the proximal mapping of the Latent Overlapping Group (LOG) lasso penalty to induce hierarchical sparsity structure represented by any general DAG. The sharing scheme for the underlying ADMM algorithm allows maximum parallelization over (potentially) many number of groups which allows solving large-scale instances of the underlying optimization problems which could be convex or nonconvex loss function. On the theoretical side, the paper establishes global linear rate of convergence in the \emph{absence of strong convexity}. The rate analysis is performed through the elegant error bound theory. Furthermore, the paper investigates the effect of graph structure on the speed of convergence of the algorithm. The numerical results confirms our theoretical convergence rate for different directed acyclic graphs with different sizes.

\bibliographystyle{authordate1} 
\bibliography{refs}


\appendix
 
\section{Proof of of the dual error bound - Lemma~\ref{lem:dual_err_bound}}\label{app:proof_lem_dual_bound}
The framework of the proof was first proposed in \cite{luo1993convergence} and also applied in \cite{hong2017linear} and requires ``locally upper Lipschitzian" property of \emph{polyhedral} multifunction for the map induced by KKT conditions-- see also \cite{robinson1981,walkup1969,robinson1973,mangasarian1987,hoffman1952}. However, due to the presence of the conic constraints in \eqref{eq:lagrange_min_conic}, the resulting multifunction is \emph{not polyhedral} anymore. Indeed, \cite{walkup1969} showed that having a polyhedral graph is a necessary condition for the upper Lipschitzian property of the multifunction. The following proof uses the specific structure of this problem to establish the dual error bound condition.

For any $\by\in\mR^n$ and $\by^*\in Y^*$, considering the KKT conditions \eqref{eq:kkt}, we have
\begin{align*}
\norm{\by-\by^*}_2^2 &= \norm{M^\top M(\bx^2(\by)-\bx^2(\by^*)) +\rho(\bx^2(\by)-\bx^1(\by))-\rho(\bx^2(\by^*)-\bx^1(\by^*))}_2^2 \\
	&= \norm{\grad_{\bx^2}\phi(M\bx^2(\by))-\grad_{\bx^2}\phi(M\bx^2(\by^*))+\grad_{\bx^2}\psi(E\bx(\by))-\grad_{\bx^2}\psi(E\bx(\by^*))}_2^2 \\
	&\leq  \norm{\grad_{\bx^2}\phi(M\bx^2(\by))-\grad_{\bx^2}\phi(M\bx^2(\by^*))}_2^2+\norm{\grad_{\bx}\psi(E\bx(\by))-\grad_{\bx}\psi(E\bx(\by^*))}_2^2, \\
\end{align*}
where the first equality follows from \eqref{eq:kkt_2}, the second equality follows from the definition of $\phi(\cdot)$ and $\psi(\cdot)$ (defined below \eqref{eq:ell_func}), the third inequality follows from the triangle inequality. Hence, using \eqref{eq:prop_phi} and \eqref{eq:prop_psi}, we have
\begin{equation}
\norm{\by-\by^*}_2^2 \leq L_{\phi}^2\norm{M\bx^2(\by)-M\bx^2(\by^*)}_2^2+L_{\psi}^2\norm{E\bx(\by)-E\bx(\by^*)}_2^2. \label{eq:dual_336}
\end{equation}
Next, consider
{\small
\begin{align*}
&\norm{M\bx^2(\by)-M\bx^2(\by^*)}_2^2+\rho\norm{E\bx(\by)-E\bx(\by^*)}_2^2  \\
	&= \fprod{M^\top M\bx^2(\by)-M^\top M\bx^2(\by^*),\bx^2(\by)-\bx^2(\by^*)}+\rho \fprod{E^\top E\bx(\by)-M^\top M\bx(\by^*),\bx(\by)-\bx(\by^*)} \\
	&= \fprod{\grad_{\bx}\phi(M\bx^2(\by))-\grad_{\bx}\phi(M\bx^2(\by^*)),\bx(\by)-\bx(\by^*)}+ \fprod{\grad_{\bx}\psi(E\bx(\by))-\grad_{\bx}\psi(E\bx(\by^*)),\bx(\by)-\bx(\by^*)} \\
	& =\fprod{\grad_{\bx}\ell(\bx(\by))-\grad_{\bx}\ell(\bx(\by^*)),\bx(\by)-\bx(\by^*)} \\
	& =\fprod{\grad_{\bx^1}\ell(\bx(\by))-\grad_{\bx^1}\ell(\bx(\by^*)),\bx^1(\by)-\bx^1(\by^*)} + \fprod{\grad_{\bx^2}\ell(\bx(\by))-\grad_{\bx^2}\ell(\bx(\by^*)),\bx^2(\by)-\bx^2(\by^*)} \\
	& =\sum_{g\in\cG}\fprod{\grad_{\bx^1_{j(g)}}\ell(\bx(\by))-\grad_{\bx^1_{j(g)}}\ell(\bx(\by^*)),\bx^1_{j(g)}(\by)-\bx^1_{j(g)}(\by^*)} \\
	&\qquad + \sum_{g\in\cG}\fprod{\grad_{\bx^2_{j(g)}}\ell(\bx(\by))-\grad_{\bx^2_{j(g)}}\ell(\bx(\by^*)),\bx^2_{j(g)}(\by)-\bx^2_{j(g)}(\by^*)} \\
	&= \sum_{g\in\cG}\fprod{w_g\m_g(\by)-\by_{j(g)}-w_g\m_g(\by^*)+\by^*_{j(g)},\bx^1_{j(g)}(\by)-\bx^1_{j(g)}(\by^*)}\\
	&\qquad +\sum_{g\in\cG}\fprod{\by_{j(g)}-\by_{j(g)}^*,\bx^2_{j(g)}(\by)-\bx^2_{j(g)}(\by^*)}
\end{align*}
}
where the second and third equalities follow from the definitions of $\phi$, $\psi$, and $\ell$, in the forth and fifth equalities the gradient is expanded over each $\bx^1$ and $\bx^2$, and the last equality follows from \eqref{eq:kkt_1},\eqref{eq:kkt_2}. Rearranging the terms in the last line above and using $\bx_{j(g)}^1(\by^*)=\bx_{j(g)}^2(\by^*)\ \forall g\in\cG$, we get
{\small
\begin{equation}
\begin{split}
&\norm{M\bx^2(\by)-M\bx^2(\by^*)}_2^2+\rho\norm{E\bx(\by)-E\bx(\by^*)}_2^2 \\
&\quad = \sum_{g\in\cG}\fprod{w_g\m_g(\by)-w_g\m_g(\by^*),\bx^1_{j(g)}(\by)-\bx^1_{j(g)}(\by^*)}+\sum_{g\in\cG}\fprod{\by_{j(g)}-\by^*_{j(g)},\bx^2_{j(g)}(\by)-\bx^1_{j(g)}(\by)}, \label{eq:45}
\end{split}
\end{equation}
}

For all $g\in\cG$, we have
\begin{align*}
&\fprod{w_g\m_g(\by)-w_g\m_g(\by^*),\bx^1_{j(g)}(\by)-\bx^1_{j(g)}(\by^*)} \\
&\quad =  w_g\m_g(\by)^\top\bx^1_{j(g)}(\by)-w_g\m_g(\by^*)^\top\bx^1_{j(g)}(\by) - w_g\m_g(\by)^\top\bx^1_{j(g)}(\by^*)+w_g\m_g(\by^*)^\top\bx^1_{j(g)}(\by^*) \\
&\quad= -w_g(s_g/w_g)\lambda-w_g\m_g(\by^*)^\top\bx^1_{j(g)}(\by)-w_g\m_g(\by)^\top\bx^1_{j(g)}(\by^*)-w_g(s_g/w_g)\lambda \\
&\quad\leq w_g\norm{\m_g(\by^*)}_2\norm{\bx^1_{j(g)}(\by)}_2+w_g\norm{\m_g(\by)}_2\norm{\bx^1_{j(g)}(\by^*)}_2-2s_g\lambda \\
&\quad\leq 0, 
\end{align*}
where the second equality follows from \eqref{eq:kkt_3} and \eqref{eq:kkt_6}, the third inequality uses Cauchy-Schwarz inequality, and  the last inequality follows from \eqref{eq:kkt_3}, \eqref{eq:kkt_4}, and \eqref{eq:kkt_5}. Hence, we have
{\small
\begin{align}
\norm{M\bx^2(\by)-M\bx^2(\by^*)}_2^2+\rho\norm{E\bx(\by)-E\bx(\by^*)}_2^2 &\leq \sum_{g\in\cG} \fprod{\by_{j(g)}-\by^*_{j(g)},\bx^2_{j(g)}(\by)-\bx^1_{j(g)}(\by)} \label{eq:551} \\
	&\leq \norm{\by-\by^*}\norm{\grad g_{\rho}(\by)}, \label{eq:52}
\end{align}
}
where \eqref{eq:551} uses nonpositivity of the first term in \eqref{eq:45} (shown above), and \eqref{eq:52} follows from Cauchy-Schwarz inequality and the fact that  $\grad g_{\rho}(\by)=\bx^1(\by)-\bx^2(\by)$ -- see e.g. \cite{hong2017linear} - Lemma~2.1. Finally, using \eqref{eq:52} and \eqref{eq:dual_336}, we get
\begin{align}
\norm{\by-\by^*}_2^2  &\leq \max\{L_{\phi}^2,L_{\psi}^2/\rho\}\Big(\norm{M\bx^2(\by)-M\bx^2(\by^*)}_2^2+\rho\norm{E\bx(\by)-E\bx(\by^*)}_2^2\Big) \\
		&\leq \max\{L_{\phi}^2,L_{\psi}^2/\rho\}\norm{\by-\by^*}_2\norm{\grad g_{\rho}(\by)}_2.
\end{align}
Hence, we have
\begin{equation}
\text{dist}(\by,Y^*) \leq \norm{\by-\by^*} \leq \max\{L_{\phi}^2,L_{\psi}^2/\rho\}\norm{\grad g_{\rho}(\by)}_2.
\end{equation}

\section{Proof of Lemma~\ref{lem:boundedness}}\label{app:proof_lem_boundedness}
Before proceeding with the proof of the boundedness of the iterates, we show the existence of a finite saddle point by the following argument. Consider 
\begin{equation}
\label{AAA}
\min_{\bx^1,\bx^2\in\mR^n} \left\{ \tilde{F}(\bx^1,\bx^2)=\lambda\sum_{g\in\cG}w_g\norm{\bx^1_{j(g)}}_2+\frac{1}{2}\norm{M\bx^2-\bb}_2^2+\frac{\rho}{2}\norm{\bx^1-\bx^2}_2^2, \ \ \text{s.t.} \ \ \bx^1=\bx^2 \right\}.
\end{equation}
The above problem is equivalent to \eqref{eq:main_2} whose objective function is coercive and continuous. By Weierstrass's Theorem (see e.g.~\cite{bertsekas1999nonlinear}), we have certain finite optimal solution to \eqref{AAA} $(\bx^{1,*},\bx^{2,*})$, i.e. $\tilde{F}^{*}=\inf _{\bx^{1}=\bx^{2}\in R^{n}} \tilde{F}(\bx^{1}, \bx^{2})=\tilde{F}(\bx^{1,*},\bx^{2,*})$. Especially, $\bx^{1,*}=\bx^{2,*}$.
Consider $L_\rho(\bx^1,\bx^2;\by)=\tilde{F}(\bx^1,\bx^2)+\fprod{\by,\bx^1-\bx^2}$ and the corresponding dual function $g_\rho(\by)$. First, we have $L_\rho(\bx^{1,*},\bx^{2,*};\by)=\tilde{F}(\bx^{1,*},\bx^{2,*})$, for $\forall \by$. By strong duality (see e.g. Prop. 5.2.1 in \cite{bertsekas1999nonlinear}), we know there is no duality gap. Furthermore, there exists at least one Lagrange multiplier $\by^*$, i.e. $\tilde{F}^*=\inf_{\bx^1,\bx^2\in\mR} L_\rho(\bx^1,\bx^2;\by^*)=g_\rho(\by^*)$. We conclude $(\bx^{1,*},\bx^{2,*};\by)$ is a finite saddle point.

The idea for the proof of boundedness of the iterates is similar to Theorem 5.1 in \cite{glowinski}; however, we do not have the strong convexity assumption. Given a finite saddle point $((\bx^{1,*},\bx^{2,*}); \by^*)$, define $\tilde{\bx}^{1,k}\triangleq \bx^{1,k}-\bx^{1,*}$, $\tilde{\bx}^{2,k}\triangleq \bx^{2,k}-\bx^{2,*}$, $\tilde{\by}^{k}\triangleq \by^{k}-\by^{*}$ where $\bx^{1,*}=\bx^{2,*}$. Establishing the boundedness of the sequence is equivalent to showing that the sequence $\{\norm{\tilde{\by}^{k}}^2_2+\alpha\rho\norm{\tilde{\bx}^{2,k}}^2_2+\alpha(\rho-\alpha)\norm{\tilde{\bx}^{1,k}-\tilde{\bx}^{2,k}}^2_2\}_{k=1}^{\infty}$ is non-increasing. From the convexity of the augmented Lagrangian function \eqref{eq:augLagrange} in $\bx^1$, we have
\begin{equation}\label{1}
    \fprod{\by^*+\rho(\bx^{1,*}-\bx^{2,*}), \bx^1-\bx^{1,*}}+\lambda\sum_{g\in\cG}w_g\norm{\bx^1_{j(g)}}_2-\lambda\sum_{g\in\cG}w_g\norm{\bx^{1,*}_{j(g)}}_2 \geq 0, \forall \bx^1. 
\end{equation}
Furthermore, from the convexity of the augmented Lagrangian \eqref{eq:augLagrange} in $\bx^2$, we have
\begin{equation}\label{2}
    \fprod{M^\top(M\bx^{2,*}-b)-\by^*+\rho(\bx^{2,*}-\bx^{1,*}), \bx^2-\bx^{2,*}}\geq 0, \forall \bx^2.
\end{equation}
From the fact that $L_\rho(\bx^{1,*},\bx^{2,*};\by^*)\leqslant L_\rho(\bx^1,\bx^2;\by^*), \forall \bx^1, \bx^2$, we have
\begin{equation}\label{3}
    \by^*=\by^*+\alpha(\bx^{1,*}-\bx^{2,*}).
\end{equation}
Similar to the arguments for \eqref{1}-\eqref{3}, from \eqref{eq:admm_0_1}-\eqref{eq:admm_0_3}, we have
{\small
\begin{align}
    &\fprod{\rho(\bx^{1,k+1}_{j(g)}-\bx^{2,k}_{j(g)})+\by^k_{j(g)}, \bx^1_{j(g)}-\bx^{1,k+1}_{j(g)}}+\lambda w_g\norm{\bx^1_{j(g)}}_2-\lambda w_g\norm{\bx^{1,k+1}_{j(g)}}_2 \geq 0, \ \ \forall g\in\cG, \forall \bx^1_{j(g)}, \label{4}\\
    &\fprod{M^\top(M\bx^{2,k+1}-b)+\rho(\bx^{2,k+1}-\bx^{1,k+1})-\by^k, \bx^2-\bx^{2,k+1}} \geq 0, \ \ \forall \bx^2, \label{5}\\
    &\by^{k+1}_{j(g)}=\by^k_{j(g)}+\alpha(\bx^{1,k+1}_{j(g)}-\bx^{2,k+1}_{j(g)}),\ \ \forall g\in\cG. \label{6}
\end{align}
}
Since $j(g)\cap j(\bar{g})=\emptyset$ for all $g,\bar{g}\in\cG$ such that $g\neq\bar{g}$, from \eqref{4} and \eqref{6}, we have: 
\begin{align}
     &\fprod{\rho(\bx^{1,k+1}-\bx^{2,k})+\by^k, \bx^1-\bx^{1,k+1}}+\lambda\sum_{g\in\cG}w_g\norm{\bx^1_{j(g)}}_2-\lambda\sum_{g\in\cG}w_g\norm{\bx^{1,k+1}_{j(g)}}_2 \geq 0, \ \ \forall \bx^1, \label{7}\\
    &\by^{k+1}=\by^k+\alpha(\bx^{1,k+1}-\bx^{2,k+1}).\ \ \label{8}
\end{align}
Setting $\bx^1=\bx^{1,k+1}$ in \eqref{1}, and $\bx^1=\bx^{1,*}$ in \eqref{7} and adding them, we get
\begin{equation}\label{9}
    \fprod{-\tilde{\by}^k+\rho(\tilde{\bx}^{2,k}-\tilde{\bx}^{1,k+1}),\tilde{\bx}^{1,k+1}}\geq 0
\end{equation}
Similarly, setting $\bx^2=\bx^{2,k+1}$ in \eqref{2}, and $\bx^2=\bx^{2,*}$ in \eqref{5} and adding them, we get
\begin{equation}\label{10}
\fprod{M^\top M\tilde{\bx}^{2,k+1}-\tilde{\by}^k-\rho(\tilde{\bx}^{1,k+1}-\tilde{\bx}^{2,k+1}), -\tilde{\bx}^{2,k+1} }\geq 0
\end{equation} 
Adding the left-hand-sides of \eqref{9} to \eqref{10} and rearranging the terms, we have, 
{\small
\begin{align*}
&\fprod{-\tilde{\by}^k,\tilde{\bx}^{1,k+1}-\tilde{\bx}^{2,k+1}}+\rho\fprod{\tilde{\bx}^{2,k}-\tilde{\bx}^{1,k+1},\tilde{\bx}^{1,k+1}}+\rho\fprod{\tilde{\bx}^{1,k+1}-\tilde{\bx}^{2,k+1},\tilde{\bx}^{2,k+1}}-\norm{M\tilde{\bx}^{2,k+1}}^2_2 \\
&=\fprod{-\tilde{\by}^k,\tilde{\bx}^{1,k+1}-\tilde{\bx}^{2,k+1}}
+\rho\fprod{\tilde{\bx}^{2,k}-\tilde{\bx}^{1,k+1}+\tilde{\bx}^{2,k+1}-\tilde{\bx}^{2,k+1},\tilde{\bx}^{1,k+1}} \\
&+\rho\fprod{\tilde{\bx}^{1,k+1}-\tilde{\bx}^{2,k+1}),\tilde{\bx}^{2,k+1}}-\norm{M\tilde{\bx}^{2,k+1}}^2_2
\end{align*}
}
Hence, we have
{\small
\begin{equation}\label{A}
\rho\fprod{\tilde{\bx}^{2,k}-\tilde{\bx}^{2,k+1},\tilde{\bx}^{1,k+1}}-\norm{M\tilde{\bx}^{2,k+1}}^2_2-\rho\norm{\tilde{\bx}^{2,k+1}-\tilde{\bx}^{1,k+1}}_2^2 \geq \fprod{\tilde{\bx}^{1,k+1}-\tilde{\bx}^{2,k+1},\tilde{\by}^k}.
\end{equation}
}
From \eqref{8}, we have the following two inequalities: 
\begin{align*}
    \tilde{\by}^{k+1}-\tilde{\by}^k&=\alpha(\tilde{\bx}^{1,k+1}-\tilde{\bx}^{2,k+1})\\
    \tilde{\by}^{k+1}+\tilde{\by}^k&=\alpha(\tilde{\bx}^{1,k+1}-\tilde{\bx}^{2,k+1})+2\tilde{\by}^k
\end{align*}
Taking the inner product of the left terms together and the right terms together, we obtain
\begin{align}\label{B}
&\norm{\tilde{\by}^{k+1}}^2_2-\norm{\tilde{\by}^{k}}^2_2 = \alpha^2\norm{\tilde{\bx}^{1,k+1}-\tilde{\bx}^{2,k+1}}+2\alpha\fprod{\tilde{\bx}^{1,k+1}-\tilde{\bx}^{2,k+1},\tilde{\by}^k}\\
&\leq \alpha(\alpha-2\rho)\norm{\tilde{\bx}^{1,k+1}-\tilde{\bx}^{2,k+1}}^2_2-2\alpha\norm{M\tilde{\bx}^{2,k+1}}_2^2+2\alpha\rho\fprod{\tilde{\bx}^{2,k}-\tilde{\bx}^{2,k+1},\tilde{\bx}^{1,k+1}}
\end{align}
where the inequality uses \eqref{A}. Next, we will upper bound $\fprod{\tilde{\bx}^{2,k}-\tilde{\bx}^{2,k+1},\tilde{\bx}^{1,k+1}}$. Setting $\bx^2=\bx^{2,k}$ in \eqref{5}, we have
\begin{equation}\label{C1}
\fprod{M^\top(M\bx^{2,k+1}-b)+\rho(\bx^{2,k+1}-\bx^{1,k+1})-\by^k, \bx^{2,k}-\bx^{2,k+1}} \geq 0.
\end{equation}
Setting $k+1$ in \eqref{5} to $k$, and $\bx^2=\bx^{2,k+1}$, we have
\begin{equation}\label{C2}
\fprod{M^\top(M\bx^{2,k}-b)+\rho(\bx^{2,k}-\bx^{1,k})-\by^{k-1}, \bx^{2,k+1}-\bx^{2,k}} \geq 0.
\end{equation} 
Adding \eqref{C1} and \eqref{C2}, we have
\begin{align}\label{D}
\begin{split}
&\fprod{\by^k-\by^{k-1},\bx^{2,k+1}-\bx^{2,k}}-\rho\norm{\bx^{2,k+1}-\bx^{2,k}}^2_2+\rho\fprod{\bx^{1,k+1}-\bx^{1,k},\bx^{2,k+1}-\bx^{2,k}}\\
&\qquad \geq\norm{M(\bx^{2,k+1}-\bx^{2,k})}_2^2\geq 0.
\end{split}
\end{align}
From \eqref{8}, we have $\by^k-\by^{k-1}=\alpha(\bx^{1,k}-\bx^{2,k})$. Using it in \eqref{D} and rearranging terms, we obtain 
\begin{equation*}\label{E}
    \rho\fprod{\bx^{1,k+1}-\bx^{1,k},\bx^{2,k+1}-\bx^{2,k}}\geq \rho\norm{\bx^{2,k+1}-\bx^{2,k}}^2_2-\alpha\fprod{\bx^{1,k}-\bx^{2,k}, \bx^{2,k+1}-\bx^{2,k}}.
\end{equation*}
Adding and subtracting $\bx^{1,*}$ and $\bx^{2,*}$ into each argument in \eqref{E} as needed, we have
\begin{equation}\label{F}
    \rho\fprod{\tilde{\bx}^{1,k+1}-\tilde{\bx}^{1,k},\tilde{\bx}^{2,k+1}-\tilde{\bx}^{2,k}}\geq \rho\norm{\tilde{\bx}^{2,k+1}-\tilde{\bx}^{2,k}}^2_2-\alpha\fprod{\tilde{\bx}^{1,k}-\tilde{\bx}^{2,k}, \tilde{\bx}^{2,k+1}-\tilde{\bx}^{2,k}}.
\end{equation}
The term $\fprod{\tilde{\bx}^{2,k}-\tilde{\bx}^{2,k+1},\tilde{\bx}^{1,k+1}}$ can be transformed as following:
{\small
\begin{align}\label{G}
\begin{split}
&\fprod{\tilde{\bx}^{2,k}-\tilde{\bx}^{2,k+1},\tilde{\bx}^{1,k+1}}\\
&=\fprod{\tilde{\bx}^{2,k}-\tilde{\bx}^{2,k+1},\tilde{\bx}^{1,k+1}-\tilde{\bx}^{1,k}+\tilde{\bx}^{1,k}-\tilde{\bx}^{2,k}+\tilde{\bx}^{2,k}}\\
&=\fprod{\tilde{\bx}^{2,k}-\tilde{\bx}^{2,k+1},\tilde{\bx}^{1,k+1}-\tilde{\bx}^{1,k}}+\fprod{\tilde{\bx}^{2,k}-\tilde{\bx}^{2,k+1},\tilde{\bx}^{1,k}-\tilde{\bx}^{2,k}}+\fprod{\tilde{\bx}^{2,k}-\tilde{\bx}^{2,k+1},\tilde{\bx}^{2,k}}\\
&=\fprod{\tilde{\bx}^{2,k}-\tilde{\bx}^{2,k+1},\tilde{\bx}^{1,k+1}-\tilde{\bx}^{1,k}}+\fprod{\tilde{\bx}^{2,k}-\tilde{\bx}^{2,k+1},\tilde{\bx}^{1,k}-\tilde{\bx}^{2,k}}+\frac{1}{2}(\norm{\tilde{\bx}^{2,k}}^2_2-\norm{\tilde{\bx}^{2,k+1}}^2_2+\norm{\tilde{\bx}^{2,k}-\tilde{\bx}^{2,k+1}}^2_2)\\
&\leq \frac{1}{2}(\norm{\tilde{\bx}^{2,k}}^2_2-\norm{\tilde{\bx}^{2,k+1}}^2_2-\norm{\tilde{\bx}^{2,k}-\tilde{\bx}^{2,k+1}}^2_2)+(1-\frac{\alpha}{\rho})\fprod{\tilde{\bx}^{2,k}-\tilde{\bx}^{2,k+1},\tilde{\bx}^{1,k}-\tilde{\bx}^{2,k}},
\end{split}
\end{align}
}
where the last inequality follows from \eqref{F}. Combining \eqref{B} and \eqref{G} and rearranging the terms, we obtain
{\small
\begin{align}\label{H}
\begin{split}
&\norm{\tilde{\by}^{k+1}}_2^2+\alpha\rho\norm{\tilde{\bx}^{2,k+1}}_2^2+\alpha(\rho-\alpha)\norm{\tilde{\bx}^{1,k+1}-\tilde{\bx}^{2,k+1}}^2_2-(\norm{\tilde{\by}^{k}}_2^2+\alpha\rho\norm{\tilde{\bx}^{2,k}}_2^2)\\
&\leq -\alpha\rho\norm{\tilde{\bx}^{1,k+1}-\tilde{\bx}^{2,k+1}}^2_2-2\alpha\norm{M\tilde{\bx}^{2,k+1}}^2_2-\alpha\rho\norm{\tilde{\bx}^{2,k}-\tilde{\bx}^{2,k+1}}^2_2+2\alpha(\rho-\alpha)\fprod{\tilde{\bx}^{2,k}-\tilde{\bx}^{2,k+1},\tilde{\bx}^{1,k}-\tilde{\bx}^{2,k}}
\end{split}
\end{align}
}
By upper bounding the last term in \eqref{H} by the identity $2\fprod{\ba,\bb}\leq\norm{\ba}_2^2+\norm{\bb}_2^2$, we get
{\small
\begin{align}
\begin{split}
&\norm{\tilde{\by}^{k+1}}_2^2+\alpha\rho\norm{\tilde{\bx}^{2,k+1}}_2^2+\alpha(\rho-\alpha)\norm{\tilde{\bx}^{1,k+1}-\tilde{\bx}^{2,k+1}}^2_2-(\norm{\tilde{\by}^{k}}_2^2+\alpha\rho\norm{\tilde{\bx}^{2,k}}_2^2+\alpha(\rho-\alpha)\norm{\tilde{\bx}^{1,k}-\tilde{\bx}^{2,k}}^2_2)\\
&\leq -\alpha\rho\norm{\tilde{\bx}^{1,k+1}-\tilde{\bx}^{2,k+1}}^2_2-2\alpha\norm{M\tilde{\bx}^{2,k+1}}^2_2-\alpha^2\norm{\tilde{\bx}^{2,k}-\tilde{\bx}^{2,k+1}}^2_2\leq 0. 
\end{split}
\end{align}
}
We have shown that the sequence $\{\norm{\tilde{\by}^{k}}^2_2+\alpha\rho\norm{\tilde{\bx}^{2,k}}^2_2+\alpha(\rho-\alpha)\norm{\tilde{\bx}^{1,k}-\tilde{\bx}^{2,k}}^2_2\}_{k=1}^{\infty}$ is non-increasing. Once the initial point and saddle point are fixed, which are not related to $\alpha$, then $\{\norm{\tilde{\by}^{k}}^2_2+\alpha\rho\norm{\tilde{\bx}^{2,k}}^2_2+\alpha(\rho-\alpha)\norm{\tilde{\bx}^{1,k}-\tilde{\bx}^{2,k}}^2_2\}_{k=1}^{\infty}$ is bounded by $\norm{\tilde{\by}^{0}}^2_2+\rho^2\norm{\tilde{\bx}^{2,0}}^2_2+\frac{\rho^2}{4}\norm{\tilde{\bx}^{1,0}-\tilde{\bx}^{2,0}}^2_2$. We concluded that the sequence $\{\bx^{1,k}\}$, $\{\bx^{2,k}\}$ and $\{\by^k\}$ generated by \eqref{eq:admm_0_3} is uniformly bounded for any $0<\alpha<\rho$.

\section{Proof of Lemma~\ref{lem:primal_err_bound}}\label{app:proof_lem_primal_bound}
The proof extends the analysis of \cite{tseng2010approximation} and \cite{zhang2013linear}. Since both works discuss primal methods, there are mainly two new ingredients in our proof: 1) dealing with the dual variable $\by$, and 2) splitting $\bx$ into $\bx^1$ and $\bx^2$, where neither step is trivial.

Given $\by$, note that $\bX(\by)$ can be written as $(\bX^1(\by),\bX^2(\by))$. 
For a fixed $\by$, and for \emph{any} sequence $\{(\bx^{1,k},\bx^{2,k};\by): \bx^{2,k} \not\in \bX^2(\by)\}_{k\geq 0}$, we define 
\begin{align}
    \br^{1,k}&\triangleq \tilde{\nabla}_{\bx^1} L_{\rho}(\bx^{1,k},\bx^{2,k};\by), \label{eq:r11}\\
    \br^{2,k}&\triangleq \tilde{\nabla}_{\bx^2} L_{\rho}(\bx^{1,k},\bx^{2,k};\by)=M^T(M\bx^{2,k}-\bb)-\by+\rho(\bx^{2,k}-\bx^{1,k}),\label{r2}\\
    \delta^k&\triangleq \norm{\bx^{2,k}-\bar{\bx}^{2,k}}_2, \ \text{ where } \bar{\bx}^{2,k}\triangleq \argmin_{\bx^2\in \bX^2(\by)}\norm{\bx^{2,k}-\bx^2}_2, \label{delta}\\
    \bar{\bx}^{1,k}&\triangleq \argmin_{\bx^1}L_{\rho}(\bx^{1},\bar{\bx}^{2,k};\by), \label{x_1}\\
    \bu^k &\triangleq \frac{\bx^{2,k}-\bar{\bx}^{2,k}}{\delta^k}.\label{uk}
\end{align}
Note that
\small{ 
\begin{align}
    \tilde{\nabla}_{\bx^1} L_{\rho}(\bx^{1},\bx^{2};\by)&=\bx^1-\operatorname{prox}_{\lambda\sum_{g\in\cG}w_g\norm{\tilde{\bd}_{j(g)}}_2}(\bx^1-\by-\rho (\bx^1-\bx^2))\\
    &=\bx^1-\argmin_{\tilde{\bd}}\ \lambda\sum_{g\in\cG} w_g\norm{\tilde{\bd}_{j(g)}}_2 +\frac{1}{2}\norm{\tilde{\bd}-(\bx^1-\by+\rho(\bx^1-\bx^2))}_2^2 \\
    &=\argmin_{\bd} \lambda\sum_{g\in\cG}w_g\norm{\bd_{j(g)}-\bx^1_{j(g)}}_2+ \frac{1}{2}\norm{\bd-\by-\rho(\bx^1-\bx^2)}^2_2, \label{eq:r1d}
\end{align}
where the second equality follows from the definition of the proximal operator and the third equality uses the transformation $\bd\triangleq\bx^1-\tilde{\bd}$. Furthermore, for any group $g\in\cG$, we have
\begin{align}\label{r1}
    &\left(\tilde{\nabla}_{\bx^1} L_{\rho}(\bx^{1},\bx^{2};\by)\right)_{j(g)}=\argmin_{\bd_{j(g)}} \lambda w_g\norm{\bd_{j(g)}-\bx^1_{j(g)}}_2+ \frac{1}{2}\norm{\bd_{j(g)}-\by_{j(g)}-\rho(\bx^1_{j(g)}-\bx^2_{j(g)})}^2_2\\
    &=
    \begin{cases}\label{cc}
        \bx^1_{j(g)}, \ \qquad & \text{if} \ \  \norm{\bx^1_{j(g)}-\by_{j(g)}-\rho(\bx^1_{j(g)}-\bx^2_{j(g)})}_2 \leq \lambda w_g,\\
        \gamma_g\bx^1_{j(g)}+(1-\gamma_g)(\by_{j(g)}+\rho(\bx^1_{j(g)}-\bx^2_{j(g)})), \ &\text{otherwise.}
    \end{cases}
\end{align}
}
where $\gamma_g=\lambda w_g/\norm{\bx^1_j(g)-\by_{j(g)}-\rho(\bx^1_{j(g)}-\bx^2_{j(g)})}_2$. Note that the two cases from the soft-thresholding operator in \eqref{cc} yield $\bx^1_{j(g)}$ at the boundary $\norm{\bx^1_{j(g)}-\by_{j(g)}-\rho(\bx^1_{j(g)}-\bx^2_{j(g)})}_2=\lambda w_g$, i.e., $\tilde{\nabla}_{x^1} L_{\rho}(\bx^{1},\bx^{2};\by)_{j(g)}$ is continuous in $(\bx^1,\bx^2,\by)$.  

To prove this lemma, we will first prove that it suffices to show that there exists $0<\tau'<+\infty$ and $\delta>0$ such that 
\begin{equation}\label{primal_error_small}
\text{dist}(\bx^2,\bX^2(\by))\leq \tau'\norm{\tilde{\nabla}_{\bx} L_{\rho}(\bx^1,\bx^2;\by)}_2,
\end{equation}
for all $(\bx^1,\bx^2;\by)$ such that $\norm{\tilde{\nabla}_{x} L_{\rho}(\bx^1,\bx^2;\by)}_2\leq \delta$. Second, we will show \eqref{primal_error_small}.

Assume \eqref{primal_error_small} holds. Given $(\bx^1,\bx^2;\by)$, pick $(\bx^{1,*},\bx^{2,*})\in \bX(\by)$, such that $\text{dist}(\bx^2,\bx^{2,*})=\text{dist}(\bx^2,\bX^2(\by))$, and $\bx^{1,*}$ such that it satisfies the optimality condition \eqref{12}. Recall that  
\begin{equation}
\tilde{\nabla}_{\bx^2} L_{\rho}(\bx^1,\bx^2;\by)=M^T(M\bx^2-\bb)-\by+\rho(\bx^2-\bx^1). \label{11}
\end{equation}
Hence, from the optimality condition, we have
\begin{equation}
\tilde{\nabla}_{\bx^2} L_{\rho}(\bx^{1,*},\bx^{2,*};\by)=M^T(M\bx^{2,*}-\bb)-\by+\rho(\bx^{2,*}-\bx^{1,*})=0\label{12}
\end{equation}
Subtracting \eqref{12} from \eqref{11} and rearranging the terms, we obtain
\begin{equation}\label{limit}
    \bx^1-\bx^{1,*}=(\frac{1}{\rho} M^T M+\bI)(\bx^2-\bx^{2,*})-\frac{1}{\rho}\tilde{\nabla}_{x^2} L_{\rho}(\bx^1,\bx^2;\by).
\end{equation}
Thus, 
\begin{align}
 \text{dist}(\bx,\bX(\by))^2&\leq \norm{\bx^1-\bx^{1,*}}^2_2+\norm{\bx^2-\bx^{2,*}}^2_2\\
 &\leq\norm{(\frac{1}{\rho} M^T M+\bI)(\bx^2-\bx^{2,*})}^2_2+\norm{\frac{1}{\rho}\tilde{\nabla}_{x^2} L_{\rho}(\bx^1,\bx^2;\by)}_2^2+\norm{\bx^2-\bx^{2,*}}^2_2. \label{eq:77}
\end{align}
Upper bounding  $\norm{\bx^2-\bx^{2,*}}^2_2$ in \eqref{eq:77} with \eqref{primal_error_small}, we have \eqref{primal_error}.

Next, we will show \eqref{primal_error_small} by contradiction. Suppose \eqref{primal_error_small} does not hold, then there exists a sequence $\{(\bx^{1,k},\bx^{2,k};\by): \bx^{2,k} \not\in \bX^2(\by)\}_{k\geq 0}$ satisfying 
\begin{equation}\label{eq:78}
\norm{\tilde{\nabla}_\bx L_{\rho}(\bx^{1,k},\bx^{2,k},\by)}_2/\delta^k\rightarrow 0, \quad \text{ and } \quad \norm{\tilde{\nabla}_\bx L_{\rho}(\bx^{1,k},\bx^{2,k},\by)}_2\rightarrow 0.
\end{equation}
Note that
\begin{equation}\label{eq:79}
    \frac{\norm{\br^{1,k}}+\norm{\br^{2,k}}}{\sqrt{2}}\leq \norm{\tilde{\nabla}_{x} L_{\rho}(\bx^{1,k},\bx^{2,k};\by)}\leq \norm{\br^{1,k}}+\norm{\br^{2,k}}
\end{equation}
where $\br^{1,k}$ and $\br^{2,k}$ are defined in \eqref{eq:r11} and \eqref{r2}, respectively. Hence, using the left inequality in \eqref{eq:79}, \eqref{eq:78} implies
\begin{equation}\label{condtion}
    \{\br^{1,k}\} \rightarrow \boldsymbol{0},\ \ \{\br^{2,k}\} \rightarrow \boldsymbol{0}, \ \ \{\frac{\norm{\br^{1,k}}+\norm{\br^{2,k}}}{\delta^k}\} \rightarrow 0.
\end{equation}
We will show that \eqref{condtion} does not hold. Since $(\bx^{1,k},\bx^{2,k})$ is in a compact set, by passing to a subsequence if necessary, we can assume that $\{(\bx^{1,k},\bx^{2,k})\rightarrow (\bar{\bx}^1,\bar{\bx}^2)\}$.  Since $\{\br^{1,k}\} \rightarrow \boldsymbol{0}$, and $\{\br^{2,k}\} \rightarrow \boldsymbol{0}$, then by the right inequality in \eqref{eq:79}, $\tilde{\nabla}_{x} L_{\rho}(\bx^{1,k},\bx^{2,k};\by)\rightarrow\bo$. Furthermore, since $\tilde{\nabla}_{x} L_{\rho}(\bx^1,\bx^2;\by)$ is continuous, this implies $\tilde{\nabla}_{x} L_{\rho}(\bar{x}^1,\bar{x}^2;\by)=\boldsymbol{0}$. It further implies that $(\bar{\bx}^1,\bar{\bx}^2)\in\bX(\by)$. Hence $\delta^k\leq \norm{\bx^{2,k}-\bar{\bx}^2}\rightarrow 0$, as $k\rightarrow \infty$, so that $\{\bar{\bx}^{2,k}\}\rightarrow \bar{\bx}^2$. And based on \eqref{limit}, we have 
\begin{equation}\label{limitt}
    \{\bar{\bx}^{1,k}\}\rightarrow \bar{\bx}^1.
\end{equation}\\
Next, we claim there exists $\kappa>0$ such that, 
\begin{equation}\label{key}
    \norm{\bx^{2,k}-\bar{\bx}^{2,k}}\leq \kappa \norm{M\bx^{2,k}-M\bar{\bx}^{2,k}}, \ \ \forall k
\end{equation}
Again, we argue \eqref{key} by contraction. Suppose \eqref{key} does not hold, then by passing to a subsequence if necessary, we can assume
\begin{equation}\label{key_contradict}
    \{\frac{\norm{M\bx^{2,k}-M\bar{\bx}^{2,k}}}{\norm{\bx^{2,k}-\bar{\bx}^{2,k}}}\}\rightarrow 0.
\end{equation}
This implies that $\{M\bu^k\}\rightarrow 0$, where $\bu^k$ is defined in \eqref{uk}. Note that $\norm{\bu^k}=1$, we can assume $\bu^k \rightarrow \bar{\bu} \neq \boldsymbol{0}$ (by further passing to a subsequence if necessary); hence, we have $M\bar{\bu}=\boldsymbol{0}$ by continuity. Combining \eqref{11} and \eqref{condtion}, we have $$M^T(M\bx^{2,k}-\bb)-\by+\rho (\bx^{2,k}-\bx^{1,k})=o(\delta_k).$$ 
Furthermore, $$M^T(M\bar{x}^{2,k}-\bb)-\by+\rho (\bar{x}^{2,k}-\bar{x}^{1,k})=0.$$
Subtracting the above two equalities and using \eqref{key_contradict}, we get
\begin{equation}\label{keykeykeykey}
\bx^{2,k}-\bar{\bx}^{2,k}=\bx^{1,k}-\bar{\bx}^{1,k}+o(\delta_k).
\end{equation}
Thus, $$\bar{\bu}=\lim_{k\rightarrow \infty} \frac{\bx^{2,k}-\bar{\bx}^{2,k}}{\delta^k}=\lim_{k\rightarrow \infty} \frac{\bx^{1,k}-\bar{\bx}^{1,k}}{\delta^k}.$$
Since $\bu^k \rightarrow \bar{\bu} \neq \boldsymbol{0}$, we have $\fprod{\bu^k, \bar{\bu}}>0$ for $k$ sufficiently large. Select $k$ such that $\fprod{\bu^k, \bar{\bu}}>0$ and let
\begin{equation}\label{x_hat}
\hat{\bx}^{2,k}\triangleq \bar{\bx}^{2,k}+\epsilon \bar{\bu}
\end{equation}
for some $\epsilon >0$. We can show that for $\epsilon>0$ sufficiently small 
\begin{equation}\label{belong}
    \hat{\bx}^{2,k}\in \bX^2(\by),
\end{equation}
whose proof is relegated to Appendix~\ref{app:85inLem46}. Now, assume $\hat{\bx}^{2,k}\in \bX^2(\by^k)$ for $\epsilon>0$ sufficiently small. 
This leads to the following contradiction: 
\begin{equation}
    \norm{\bx^{2,k}-\hat{\bx}^{2,k}}_2= \norm{\bx^{2,k}-\bar{\bx}^{2,k}-\epsilon\bar{\bu}}_2=\delta^k+\epsilon^2-2\epsilon \fprod{\bu^k, \bar{\bu}} < \delta^k
\end{equation}
for $\epsilon$ sufficiently small, which contradicts the definition of $\bar{\bx}^{2,k}$ in \eqref{delta}. So \eqref{key} holds.

By \eqref{eq:r1d}, we have
\begin{equation}\label{pal_1}
    \boldsymbol{0} \in \lambda \partial \sum_{g\in\cG}w_g\norm{\br^{1,k}_{j(g)}-\bx^{1,k}_{j(g)}}_2+(\br^{1,k}-\by-\rho(\bx^{1,k}-\bx^{2,k})),
\end{equation}
which is the optimal condition to
\begin{equation} \label{final_1}
    \br^{1,k}\in \argmin_\bd \lambda \sum_{g\in\cG}w_g\norm{\bd_{j(g)}-\bx^{1,k}_{j(g)}}_2+\fprod{\br^{1,k}-\by-\rho(\bx^{1,k}-\bx^{2,k}),\bd}.
\end{equation}
From \eqref{final_1}, we have
\begin{align}\label{final_2}
\begin{split}
    &\lambda\sum_{g\in\cG}w_g\norm{\br^{1,k}_{j(g)}-\bx^{1,k}_{j(g)}}_2+\fprod{\br^{1,k}-\by-\rho(\bx^{1,k}-\bx^{2,k}),\br^{1,k}} \\
	& \qquad \qquad \leq \lambda\sum_{g\in\cG}w_g\norm{\bar{\bx}^{1,k}}_2+\fprod{\br^{1,k}-\by-\rho(\bx^{1,k}-\bx^{2,k}),\bx^{1,k}-\bar{\bx}^{1,k}}.
\end{split}
\end{align}
From $\tilde{\nabla}_{\bx^1} L_{\rho}(\bar{\bx}^{1,k},\bar{\bx}^{2,k};\by)=0$, we have
\begin{equation}
    \boldsymbol{0}=\argmin_\bd \lambda\sum_{g\in\cG}w_g\norm{\bd_{j(g)}-\bar{\bx}^1_{j(g)}}_2+ \frac{1}{2}\norm{\bd-\by-\rho(\bar{\bx}^1-\bar{\bx}^2)}^2_2. \label{final_section}
\end{equation}
Similar to \eqref{pal_1}, we have 
\begin{equation}
    \boldsymbol{0} \in \lambda \partial \sum_{g\in\cG}w_g\norm{\bar{\bx}^{1,k}_{j(g)}}_2+(-\by-\rho(\bar{\bx}^{1,k}-\bar{\bx}^{2,k})),
\end{equation}
which is the optimal condition to
\begin{equation}\label{abc}
    \boldsymbol{0}\in \argmin_\bd \lambda \sum_{g\in\cG}w_g\norm{\bd_{j(g)}-\bar{\bx}^{1,k}_{j(g)}}_2+\fprod{-\by-\rho(\bar{\bx}^{1,k}-\bar{\bx}^{2,k}),\bd}.
\end{equation}
From \eqref{abc}, we have
\begin{align}\label{final_3}
\begin{split}
    &\lambda\sum_{g\in\cG}w_g\norm{\bar{\bx}^{1,k}_{j(g)}}_2\\
    &\qquad \leq \lambda\sum_{g\in\cG}w_g\norm{\br^{1,k}_{j(g)}-\bx^{1,k}_{j(g)}}_2+\fprod{-\by-\rho(\bar{\bx}^{1,k}-\bar{\bx}^{2,k}),\bar{\bx}^{1,k}+\br^{1,k}-\bx^{1,k}}.
\end{split}
\end{align}
Adding \eqref{final_2} and \eqref{final_3}, and using \eqref{r2}, we obtain
\begin{align} \label{eq:98}
\begin{split}
&\fprod{\br^{1,k}+\br^{2,k},\br^{1,k}}+\fprod{M^TM(\bx^{2,k}-\bar{\bx}^{2,k}),\bx^{1,k}-\bar{\bx}^{1,k}}\\
& \qquad \leq \fprod{\br^{1,k}+\br^{2,k},\bx^{1,k}-\bar{\bx}^{1,k}}+\fprod{M^TM(\bx^{2,k}-\bar{\bx}^{2,k}),\br^{1,k}}.
\end{split}
\end{align}
From \eqref{r2}, we have
\begin{equation} \label{eq:from67}
\bx^{1,k}-\bar{\bx}^{1,k}=(\frac{1}{\rho}M^TM+\bI)(\bx^{2,k}-\bar{\bx}^{2,k})-\frac{1}{\rho}\br^{2,k}.
\end{equation}
Defining $A\triangleq \frac{1}{\rho}M^TM+I$ and using \eqref{eq:from67} in \eqref{eq:98} and rearranging the terms, we obtain 
\begin{align}\label{QQ}
\begin{split}
&\fprod{\br^{1,k}+\br^{2,k},\br^{1,k}+\frac{1}{\rho}\br^{2,k}}+\fprod{M^TM(\bx^{2,k}-\bar{\bx}^{2,k}),A(\bx^{2,k}-\bar{\bx}^{2,k})}\\
& \qquad \leq \fprod{\br^{1,k}+\br^{2,k},A(\bx^{2,k}-\bar{\bx}^{2,k})}+\fprod{M^TM(\bx^{2,k}-\bar{\bx}^{2,k}),\br^{1,k}+\frac{1}{\rho}\br^{2,k}}. 
\end{split}
\end{align}
Let us consider term by term. We have
\begin{align}
\fprod{\br^{1,k}+\br^{2,k},\br^{1,k}+\frac{1}{\rho}\br^{2,k}} \geq \norm{\br^{1,k}}^2_2+\frac{1}{\rho}\norm{\br^{2,k}}^2_2-(\frac{1}{\rho}+1)\norm{\br^{1,k}}_2\norm{\br^{2,k}}_2.
\end{align}
Next, using \eqref{key}, we have
\begin{align}
    \fprod{M^TM(\bx^{2,k}-\bar{\bx}^{2,k}),A(\bx^{2,k}-\bar{\bx}^{2,k})}&=\frac{1}{\rho}\norm{M^TM(\bx^{2,k}-\bar{\bx}^{2,k})}_2^2+\norm{M(\bx^{2,k}-\bar{\bx}^{2,k})}^2_2\\
    &\geq \kappa^2\norm{\bx^{2,k}-\bar{\bx}^{2,k}}^2_2.
\end{align}
Denote the largest eigenvalue of matrix $A$ by $L_1$, we have
\begin{align}
\fprod{\br^{1,k}+\br^{2,k},A(\bx^{2,k}-\bar{\bx}^{2,k})} \leq L_1 \norm{\br^{1,k}+\br^{2,k}}_2 \norm{\bx^{2,k}-\bar{\bx}^{2,k}}_2.
\end{align}
Denote $L_2\triangleq \max_{\norm{\bd}=1} \norm{M\bd}$, 
\begin{align}
 \fprod{M^TM(\bx^{2,k}-\bar{\bx}^{2,k}),\br^{1,k}+\frac{1}{\rho}\br^{2,k}} \leq L_2^2\norm{\br^{1,k}+\frac{1}{\rho}\br^{2,k}}_2\norm{\bx^{2,k}-\bar{\bx}^{2,k}}_2.
\end{align}
Combining the four inequalities above, we have
\begin{align}\label{keykey}
\begin{split}
    &\kappa^2\norm{\bx^{2,k}-\bar{\bx}^{2,k}}^2_2+\norm{\br^{1,k}}^2_2+\frac{1}{\rho}\norm{\br^{2,k}}^2_2-(\frac{1}{\rho}+1)\norm{\br^{1,k}}_2\norm{\br^{2,k}}_2\\
    & \qquad \qquad \leq (L_1 \norm{\br^{1,k}+\br^{2,k}}_2+L_2^2\norm{\br^{1,k}+\frac{1}{\rho}\br^{2,k}}_2)\norm{\bx^{2,k}-\bar{\bx}^{2,k}}_2.
\end{split}
\end{align}
Denote $b\triangleq L_1 \norm{\br^{1,k}+\br^{2,k}}_2+L_2^2\norm{\br^{1,k}+\frac{1}{\rho}\br^{2,k}}_2$, $c\triangleq \norm{\br^{1,k}}^2_2+\frac{1}{\rho}\norm{\br^{2,k}}^2_2-(\frac{1}{\rho}+1)\norm{\br^{1,k}}_2\norm{\br^{2,k}}_2$. 
Using quadratic formula, \eqref{keykey} implies
\begin{equation}\label{keykeykey}
    \norm{\bx^{2,k}-\bar{\bx}^{2,k}}_2 \leq \frac{b+\sqrt{b^2-4\kappa^2 c}}{2\kappa^2}.
\end{equation}
Note that the right-hand-side of \eqref{keykeykey} is $\mathcal{O}(\norm{\br^{1,k}}+\norm{\br^{2,k}})$, 
so \eqref{keykeykey} contradicts \eqref{condtion}, which says $$\norm{\br^{1,k}}+\norm{\br^{2,k}}=o(\norm{\bx^{2,k}-\bar{\bx}^{2,k}}).$$

So far, we have shown that for a fixed $\by$, there exist $\tau$ and $\delta$ satisfying \eqref{primal_error} and the inequality below it, accordingly. From \eqref{r1} and \eqref{r2}, we know $\tilde{\nabla}_{\bx} L_{\rho}(\bx^1,\bx^2;\by)$ is continuous in $\by$. Since $\bX(\by)$ is characterized by $\tilde{\nabla}_{\bx} L_{\rho}(\bx^1,\bx^2;\by)=0$, so $\text{dist}(\bx,\bX(\by))$ is also continuous in $\by$, which implies that we can define a continuous mapping from $\by$ to $\tau$ and $\delta$. Note that from the above proof, we know that for any $\by$, $\tau$ is finite, i.e., $\tau<\infty$, and $\delta>0$. Hence, since $\by$ is in a compact set, we can find $\bar{\tau}\triangleq \sup\{\tau\}<\infty$ and $\bar{\delta}\triangleq \inf\{\delta\} >0$. This finishes the proof of the lemma.

\section{Proof of \eqref{belong} in Lemma~\ref{lem:primal_err_bound}}\label{app:85inLem46}
The following proof is inspired by \cite{tseng2010approximation} and \cite{zhang2013linear}. Denote $\bt^k\triangleq \by+\rho(\bx^{1,k}-\bx^{2,k})$. Note that if we write \eqref{eq:augLagrange} as a function of $\bx^2$ and $\bz\triangleq\bx^1-\bx^2$, then the last term is strongly convex in $\bz$. This implies that the value of $\bar{\bt}\triangleq\by+\rho(\bx^1-\bx^2)$ is unique for $\forall (\bx^1,\bx^2)\in \bX(\by)$. 
Recall the definition in \eqref{x_1} and \eqref{x_hat}, we will show \eqref{belong} is equivalent to 
\begin{equation}\label{complicate}
 \boldsymbol{0} \in \lambda \partial w_g \norm{(\bar{\bx}^{1,k}+\epsilon\bar{\bu})_{j(g)}}+\bar{\bt}_{j(g)},\ \ \forall g.
\end{equation}
From the optimality condition of \eqref{eq:augLagrange}, we know  
$\hat{\bx}^{2,k}\in \bX^2(\by)$ is equivalent to 
\begin{equation}
\begin{cases}\label{gradient}
        \boldsymbol{0} \in \lambda \partial \sum_{g\in\cG}w_g\norm{\bx^{1}_{j(g)}}_2+(\by+\rho(\bx^1-\hat{\bx}^2))_{j(g)}, \ \forall g \\
        \boldsymbol{0}=M^T(M\hat{\bx}^{2,k}-\bb)-(\by+\rho(\bx^1-\hat{\bx}^2)), \ 
\end{cases}
\end{equation}
is satisfied for some $\bx^1$.
From \eqref{x_hat}, we have 
\begin{equation}
    M\hat{\bx}^2=M\bar{\bx}^2
\end{equation}
since $M\bar{\bu}=\boldsymbol{0}$. 
So the second equality of \eqref{gradient} holds if and only if $\bx^1=\bar{\bx}^1+\epsilon\bar{\bu}$.\footnote{In fact, from our discussion on the uniqueness of $\by+\rho(\bx^1-\bx^2)$ for $\forall (\bx^1,\bx^2)\in \bX(\by)$, we can also conclude that $\bx^1$ must be $\bar{\bx}^1+\epsilon\bar{\bu}$.} Since  $\boldsymbol{0}=M^T(M\bar{\bx}^{2,k}-\bb)-(\by+\rho(\bar{\bx}^1-\bar{\bx}^2))$ holds by definitions \eqref{x_1} and \eqref{delta}, \eqref{gradient} is equivalent to 
\begin{equation}
    \boldsymbol{0} \in \lambda \partial \sum_{g\in\cG}w_g\norm{\bar{\bx}^{1}_{j(g)}+\epsilon\bar{\bu}}_2+(\by+\rho(\bar{\bx}^1-\bar{\bx}^2))_{j(g)}\ \forall g.
\end{equation}
Using $\bar{\bt}$ to replace $(\by+\rho(\bar{\bx}^1-\bar{\bx}^2))$, we have \eqref{complicate}.

Based on \eqref{condtion} and \eqref{key_contradict}, we have
\begin{equation}\label{mm}
    \bt^k-\bar{\bt}=M^TM(\bx^{2,k}-\bar{\bx}^{2,k})-\br^{2,k}=o(\delta^k).
\end{equation}
By further passing to a subsequence if necessary, we can assume that, for each $g\in\cG$, either 
\begin{enumerate}
    \item $\norm{\bx^{1,k}_{j(g)}-\bt^k_{j(g)}}_2\leq \lambda w_g, \ \ \forall k$, or, 
    \item $\norm{\bx^{1,k}_{j(g)}-\bt^k_{j(g)}}_2 > \lambda w_g$, and $\bar{\bx}^{1,k}_{j(g)}\neq \boldsymbol{0}, \ \ \forall k$, or, 
    \item $\norm{\bx^{1,k}_{j(g)}-\bt^k_{j(g)}}_2 > \lambda w_g$, and $\bar{\bx}^{1,k}_{j(g)}= \boldsymbol{0}, \ \ \forall k$,
\end{enumerate}
is true. We will show that in any of the three above cases, $\bar{\bu}_{j(g)}$ is a certain multiple of $\bar{\bt}_{j(g)}$ and then \eqref{complicate} is satisfied. 
\begin{enumerate}
    \item In this case, from \eqref{cc}, we know
      \begin{equation}
          \bar{\bu}_{j(g)}=\lim_{k\rightarrow \infty} \frac{\bx^{1,k}_{j(g)}-\bar{\bx}^{1,k}_{j(g)}}{\delta^k}=\lim_{k\rightarrow \infty} \frac{\br^{1,k}_{j(g)}-\bar{\bx}^{1,k}_{j(g)}}{\delta^k}=\lim_{k\rightarrow \infty} \frac{-\bar{\bx}^{1,k}_{j(g)}}{\delta^k}
      \end{equation}
      where the last equation comes from \eqref{condtion}.
      Suppose that $\bar{\bu}_{j(g)}\neq \boldsymbol{0}$. (Otherwise, $\hat{\bx}^{2,k}=\bar{\bx}^{2,k}$.) Then $\bar{\bx}^{1,k}_{j(g)}\neq \boldsymbol{0}$ for all $k$ sufficiently large. From the optimality condition for \eqref{eq:augLagrange}, we have
      \begin{equation}\label{pp}
          \boldsymbol{0}=\lambda w_g\frac{\bar{\bx}^{1,k}_{j(g)}}{\norm{\bar{\bx}^{1,k}_{j(g)}}_2}+ \bar{\bt}_{j(g)},
      \end{equation}
      for $k$ sufficiently large. By continuity, we have $\bar{\bu}_{j(g)}$ is a positive multiple of $\bar{\bt}_{j(g)}$. Furthermore, $\bar{\bx}^{1,k}_{j(g)}$ is a negative multiple of $\bar{\bt}_{j(g)}$. Therefore, for $\epsilon$ sufficiently small, \eqref{complicate} is satisfied. 
    \item In this case, since we assumed $\bar{\bx}^{1,k}_{j(g)}\neq \boldsymbol{0} \ \ \forall k$, \eqref{pp} is always satisfied. It implies 
    \begin{equation}\label{4_3}
        \bar{\bt}_{j(g)}=\lambda w_g\frac{\bar{\bt}_{j(g)}-\bar{\bx}^{1,k}_{j(g)}}{\norm{\bar{\bt}_{j(g)}-\bar{\bx}^{1,k}_{j(g)}}_2}.
    \end{equation}
From \eqref{cc}, we have
     \small{
     \begin{align*}
         \br^{1,k}_{j(g)}&=\frac{\lambda w_g}{\norm{\bx^{1,k}_j(g)-\bt^k_{j(g)}}_2}\bx^{1,k}_{j(g)}+(\frac{\norm{\bx^{1,k}_j(g)-\bt^k_{j(g)})}_2-\lambda w_g}{\norm{\bx^{1,k}_j(g)-\bt_{j(g)}^k}_2})\bt^k_{j(g)}\\
         &=\frac{\lambda w_g}{\norm{\bx^{1,k}_j(g)-\bt^k_{j(g)}}_2}(\bar{\bx}^{1,k}_{j(g)}+\delta^k\bu^k_{j(g)}+o(\delta^k))+(\frac{\norm{\bx^{1,k}_j(g)-\bt^k_{j(g)})}_2-\lambda w_g}{\norm{\bx^{1,k}_{j(g)}-\bt_{j(g)}^k}_2})(\bar{\bt}_{j(g)}+o(\delta^k))\\
         &=\frac{\lambda w_g \delta^k}{\norm{\bx^{1,k}_{j(g)}-\bt^k_{j(g)}}_2}\bu^k_{j(g)}+\frac{\lambda w_g}{\norm{\bx^{1,k}_{j(g)}-\bt^k_{j(g)}}_2}(\bar{\bx}^{1,k}_{j(g)}-\bar{\bt}_{j(g)})+\bar{\bt}_{j(g)}+o(\delta^k)\\
         &=\frac{\lambda w_g \delta^k}{\norm{\bx^{1,k}_{j(g)}-\bt^k_{j(g)}}_2}\bu^k_{j(g)}+(\frac{\lambda w_g}{\norm{\bar{\bt}_{j(g)}-\bar{\bx}^{1,k}_{j(g)}}_2}-\frac{\lambda w_g}{\norm{\bt^k_{j(g)}-\bx^{1,k}_{j(g)}}_2})(\bar{\bt}_{j(g)}-\bar{\bx}^{1,k}_{j(g)})+o(\delta^k)\\
         &=\frac{\lambda w_g \delta^k}{\norm{\bar{\bt}_{j(g)}-\bar{\bx}^{1,k}_{j(g)}-\delta^k\bu^k_{j(g)}+o(\delta^k)}_2}\bu^k_{j(g)}\\
         &+(\frac{\lambda w_g}{\norm{\bar{\bt}_{j(g)}-\bar{\bx}^{1,k}_{j(g)}}_2}-\frac{\lambda w_g}{\norm{\bar{\bt}_{j(g)}-\bar{\bx}^{1,k}_{j(g)}-\delta^k\bu^k_{j(g)}+o(\delta^k)}_2})(\bar{\bt}_{j(g)}-\bar{\bx}^{1,k}_{j(g)})+o(\delta^k)
     \end{align*}
}
where the second equality comes from \eqref{keykeykeykey} and \eqref{mm}. The forth equality follows from \eqref{4_3}. Finally, we use \eqref{keykeykeykey} and \eqref{mm} in the last equality. From the Taylor expansion of $\norm{\cdot}_2^{-1}$ and given that $\nabla_{\bx} \norm{\bx}_2^{-1}=-\bx/\norm{\bx}^3_2$, we have
     \begin{align*}
         &\frac{1}{\norm{\bar{\bt}_{j(g)}-\bar{\bx}^{1,k}_{j(g)}-\delta^k\bu^k_{j(g)}+o(\delta^k)}_2}=\frac{1}{\norm{\bar{\bt}_{j(g)}-\bar{\bx}^{1,k}_{j(g)}}_2}-\frac{\fprod{\bar{\bt}_{j(g)}-\bar{\bx}^{1,k}_{j(g)},-\delta^k\bu^k_{j(g)}+o(\delta^k)}}{\norm{\bar{\bt}_{j(g)}-\bar{\bx}^{1,k}_{j(g)}}_2^3}\\
         &+o(\norm{-\delta^k\bu^k_{j(g)}+o(\delta^k)}_2)\\
         &=\frac{1}{\norm{\bar{\bt}_{j(g)}-\bar{\bx}^{1,k}_{j(g)}}_2}+\frac{\fprod{\bar{\bt}_{j(g)}-\bar{\bx}^{1,k}_{j(g)},\delta^k\bu^k_{j(g)}}}{\norm{\bar{\bt}_{j(g)}-\bar{\bx}^{1,k}_{j(g)}}_2^3}+o(\delta^k).
     \end{align*}
     Using this back in the last equation for $\br^{1,k}_{j(g)}$ and rearranging the terms, we have
     \begin{align*}
       \br^{1,k}_{j(g)}&=\frac{\lambda w_g\delta^k}{\norm{\bar{\bt}_{j(g)}-\bar{\bx}^{1,k}_{j(g)}}_2}\bu^k_{j(g)}-\frac{\lambda w_g\fprod{\bar{\bt}_{j(g)}-\bar{\bx}^{1,k}_{j(g)},\delta^k\bu^k_{j(g)}}}{\norm{\bar{\bt}_{j(g)}-\bar{\bx}^{1,k}_{j(g)}}_2^3} (\bar{\bt}_{j(g)}-\bar{\bx}^{1,k}_{j(g)})+o(\delta^k)\\
       &=\frac{\lambda w_g\delta^k}{\norm{\bar{\bt}_{j(g)}-\bar{\bx}^{1,k}_{j(g)}}_2}\bu^k_{j(g)}-\frac{\fprod{\bar{\bt}_{j(g)},\delta^k\bu^k_{j(g)}}}{\norm{\bar{\bt}_{j(g)}-\bar{\bx}^{1,k}_{j(g)}}_2} (\frac{\bar{\bt}_{j(g)}}{\lambda w_g})+o(\delta^k),
     \end{align*}
where the second equality uses \eqref{4_3}. Multiplying both sides by $\frac{\norm{\bar{\bt}_{j(g)}-\bar{\bx}^{1,k}_{j(g)}}_2}{\lambda w_g \delta^k}$ and using \eqref{condtion},\eqref{limitt} and $\norm{\bar{\bt}_{j(g)}}_2=\lambda w_g$ (from \eqref{4_3}) yields in the limit 
     \begin{equation}\label{case_2}
         \boldsymbol{0}=\bar{\bu}_{j(g)}-\frac{\fprod{\bar{\bt}_{j(g)},\bar{\bu}_{j(g)}}}{\norm{\bar{\bt}_{j(g)}}^2_2}\bar{\bt}_{j(g)}.
     \end{equation}
     Thus $\bar{\bu}_{j(g)}$ is a nonzero multiple of $\bar{\bt}_{j(g)}$. In this case, since we assume $\bar{\bx}^{1,k}_{j(g)}\neq \boldsymbol{0}$, from \eqref{pp}, we know $\bar{\bx}^{1,k}_{j(g)}$ is a negative multiple of $\bar{\bt}_{j(g)}$. So \eqref{complicate} is satisfied for $\epsilon$ sufficiently small. 
    \item In this case, we assume $\bar{\bx}^{1,k}_{j(g)}= \boldsymbol{0}, \forall k$, from \eqref{limitt}, we have $\bar{\bx}^1_{j(g)}=\boldsymbol{0}$. We also assume that $\norm{\bx^{1,k}_{j(g)}-\bt^k_{j(g)}}_2 > \lambda w_g$ for all $k$, this implies $\norm{\bar{\bt}_{j(g)}}_2\geq \lambda w_g$. From the optimality condition for \eqref{eq:augLagrange} for $\bx^1$ we have
    \begin{equation*}
        \boldsymbol{0}=\bar{\bt}_{j(g)}+\lambda w_g \partial\norm{\boldsymbol{0}}_2,
    \end{equation*}
    which implies $\norm{\bar{\bt}_{j(g)}}_2\leq \lambda w_g$. Thus $\norm{\bar{\bt}_{j(g)}}_2=\lambda w_g$. Then \eqref{cc} implies 
    \begin{align*}
    \br^{1,k}_{j(g)}&=\frac{\lambda w_g}{\norm{\bx^{1,k}_j(g)-\bt^k_{j(g)}}_2}\bx^{1,k}_{j(g)}+(\frac{\lambda w_g}{\norm{\bar{\bt}_{j(g)}}_2}-\frac{\lambda w_g}{\norm{\bx^{1,k}_j(g)-\bt_{j(g)}^k}_2})\bt^k_{j(g)}\\
    &=\frac{\lambda w_g}{\norm{\bx^{1,k}_j(g)-\bt^k_{j(g)}}_2}\bx^{1,k}_{j(g)}+\frac{\lambda w_g \fprod{\bar{\bt}_{j(g)}, \bt^k_{j(g)}-\bx^{1,k}_{j(g)}-\bar{\bt}_{j(g)}}}{\norm{\bar{\bt}_{j(g)}}^3_2}\bt_{j(g)}\\
    &+o(\norm{\bt^k_{j(g)}-\bx^{1,k}_j(g)-\bar{\bt}_{j(g)}}_2)\\
    &=\frac{\lambda w_g}{\norm{\bx^{1,k}_j(g)-\bt^k_{j(g)}}_2}\bx^{1,k}_{j(g)}-\frac{\lambda w_g \fprod{\bar{\bt}_{j(g)}, \bx^{1,k}_{j(g)}}}{\norm{\bar{\bt}_{j(g)}}^3_2}\bt_{j(g)}+o(\delta^k)
    \end{align*}
    where the second equality uses Taylor expansion similar to the case 2. The third equality follows from \eqref{mm} and $\{\bx^{1,k}_{j(g)}\}\rightarrow\boldsymbol{0}$. Dividing both sides by $\delta^k$ yield in the limit \eqref{case_2}, where it uses $$\{\frac{\bx^{1,k}_{j(g)}}{\delta^k}\}=\{\bu^k_{j(g)}+\frac{o(\delta^k)}{\delta^k}\}\rightarrow \bar{\bu}_{j(g)}.$$\\
    Since we assume $\norm{\bx^{1,k}_{j(g)}-\bt^k_{j(g)}}_2 > \lambda w_g$ for all $k$, we have the following equality from \eqref{r1} 
    \begin{equation}\label{122}
        \boldsymbol{0}=\lambda w_g \frac{\br^{1,k}_{j(g)}-\bx^{1,k}_{j(g)}}{\norm{\br^{1,k}_{j(g)}-\bx^{1,k}_{j(g)}}_2}+\br^{1,k}_{j(g)}-\bt^k_{j(g)}.
    \end{equation}
        Suppose $\bar{\bu}_{j(g)}\neq \boldsymbol{0}$. Then $\bu^k_{j(g)}=\frac{\bx^{1,k}_{j(g)}}{\delta^k}+\frac{o(\delta^k)}{\delta^k}\neq \boldsymbol{0}$, for $k$ sufficiently large. It implies that $\bx^{1,k}_{j(g)}\neq \boldsymbol{0}$, for $k$ sufficiently large. Hence,
        
    \begin{align*}
        \fprod{\bar{\bt}_{j(g)},\bar{\bu}_{j(g)}}&=\lim_{k\rightarrow +\infty} \fprod{\bt^k_{j(g)},\bu^k_{j(g)}}\\
        &=\lim_{k\rightarrow +\infty}\fprod{\br^{1,k}_{j(g)},\frac{\bx^{1,k}_{j(g)}}{\delta^k}}+\fprod{\lambda w_g\frac{\br^{1,k}_{j(g)}-\bx^{1,k}_{j(g)}}{\norm{\br^{1,k}_{j(g)}-\bx^{1,k}_{j(g)}}_2},\frac{\bx^{1,k}_{j(g)}}{\delta^k}}\\
        &=\lim_{k\rightarrow +\infty}\frac{\lambda w_g}{\norm{\br^{1,k}_{j(g)}-\bx^{1,k}_{j(g)}}_2}(\frac{\fprod{\br^{1,k}_{j(g)},\bx^{1,k}_{j(g)}}}{\delta^k}-\frac{\norm{\bx^{1,k}_{j(g)}}_2^2}{\delta^k})\\
        &=\lim_{k\rightarrow +\infty}\frac{\lambda w_g}{\norm{\frac{\br^{1,k}_{j(g)}}{\norm{\bx^{1,k}_{j(g)}}_2}-\frac{\bx^{1,k}_{j(g)}}{\norm{\bx^{1,k}_{j(g)}}_2}}_2}(\fprod{\frac{\br^{1,k}_{j(g)}}{\delta^k},\frac{\bx^{1,k}_{j(g)}}{\norm{\bx^{1,k}_{j(g)}}_2}}-\norm{\bu^k_{j(g)}}_2)\\
        &=-\lambda w_g \norm{\bu^k_{j(g)}}_2 <0,
    \end{align*}
where the second equality is based on \eqref{122} and $\lim_{k\rightarrow +\infty} \bu^k_{j(g)}=\lim_{k\rightarrow +\infty}\frac{\bx^{1,k}_{j(g)}}{\delta^k}$, the third equality is based on $\frac{\br^{1,k}_{j(g)}}{\delta^k}\rightarrow \boldsymbol{0}$ (by \eqref{condtion}), the forth equality is based on $\bx^{1,k}_{j(g)}\neq \boldsymbol{0}$ and $\lim_{k\rightarrow +\infty} \bu^k_{j(g)}=\lim_{k\rightarrow +\infty}\frac{\bx^{1,k}_{j(g)}}{\delta^k}$, and, the fifth equality is based on $\br^{1,k}_{j(g)}\rightarrow \boldsymbol{0}$ and $\frac{\br^{1,k}_{j(g)}}{\delta^k}\rightarrow \boldsymbol{0}$ (by \eqref{condtion}). Finally, combined with \eqref{case_2}, we obtain $\bar{\bu}_{j(g)}$ is a negative multiplier of $\bar{\bt}_{j(g)}$. Since $\bar{\bx}^1_{j(g)}=\boldsymbol{0}$ in this case, \eqref{complicate} is satisfied. 
\end{enumerate}

\section{Proof of Theorem~\ref{thm:main}}\label{app:main_thm}
From Lemmas~2.4 and 2.5 in \cite{hong2017linear}, we have the following two identities, respectively:
\begin{align}
    L(\bx^k; \by^k)-L(\bx^{k+1}; \by^k)&\geq \rho \norm{\bx^k-\bx^{k+1}}^2,\label{L_difference}\\
    \norm{\tilde{\nabla}L(\bx^k; \by^k)}&\leq \sigma\norm{\bx^k-\bx^{k+1}}\label{L_gradient},
\end{align}
where $\sigma=\sqrt{2}(\max \{1+\norm{M^T}\norm{M}, \rho\}+1)$.

Lemma~\ref{lem:boundedness} in our paper establishes the uniform boundedness of iterates $\{(\bx^k,\by^k)\}$, based on which the compactness condition of Lemma~\ref{lem:primal_err_bound} is satisfied. Lemma~\ref{lem:primal_err_bound} quantifies the primal error bound with the proximal gradient of the Lagrangian function as
\begin{equation}
	\norm{\bx^k-\bar{\bx}^k}\leq\tau_p\norm{\tilde{\nabla}L(\bx^k; \by^k)}\leq\tau_p\sigma\norm{\bx^k-\bx^{k+1}},\label{x_sigma}
\end{equation}
where $\bar{\bx}^k=\argmin_{\bar{\bx}\in\bX(\by^k)}  \norm{\bar{\bx}-\bx^k}$ and the second inequality comes from \eqref{L_gradient}.

In Lemma~\ref{lem:dual_err_bound}, we show
\begin{equation}\label{main:dual_error_bound}
  \text{dist}(\by,Y^*)\leq \tau_d\norm{\grad g_{\rho}(\by)}_2,
\end{equation}
where $\tau_d=\max\{\norm{M^T}^2, 2\rho\}$ as shown in the proof of Lemma \ref{lem:dual_err_bound}. Based on the dual error bound \eqref{main:dual_error_bound} and following Lemma~3.1 in \cite{hong2017linear}, we have 
\begin{align}
    &\Delta^k_d\leq \tau'\norm{\nabla g_{\rho}(\by^k)}^2=\tau'\norm{\bar{\bx}^{1,k}-\bar{\bx}^{2,k}}^2, \label{d}\\
    &\Delta^k_p\leq \zeta\norm{\bx^k-\bx^{k+1}}^2+\zeta'\norm{\bx^k-\bar{\bx}^k}^2\leq(\xi+\xi'\tau_p^2\sigma^2)\norm{\bx^{k+1}-\bx^k}^2,\label{zeta_two}
\end{align}
where $\bar{\bx}^{1,k}$ and $\bar{\bx}^{2,k}$ represent the upper half and lower half of the vector $\bar{\bx}^k$, correspondingly, $\tau'=\tau_d^2/\rho$, and
\begin{align}
	\zeta&=2A+\frac{3\sqrt{2}}{2}(\sigma-1),\\
	\zeta'&=2A+\frac{1}{2}+\frac{\sqrt{2}}{2}(\sigma-1),
\end{align}
where $A=\norm{M^T}\norm{M}+\sqrt{2}\rho$.

Following Lemmas~3.2 and 3.3 in \cite{hong2017linear}, we have
\begin{align}
	\Delta_d^k-\Delta_d^{k-1}&\leq-\alpha(\bx^{1,k}-\bx^{2,k})^T(\bar{\bx}^{1,k}-\bar{\bx}^{2,k}),\label{delta_dual}\\
	\Delta_p^k-\Delta_p^{k-1}&\leq \alpha\norm{\bx^{1,k}-\bx^{2,k}}^2-\gamma\norm{\bx^{k+1}-\bx^k}^2-\alpha(\bx^{1,k}-\bx^{2,k})^T(\bar{\bx}^{1,k}-\bar{\bx}^{2,k}),\label{delta_primal}
\end{align}
where $\Delta_p^k$ and $\Delta_d^k$ are primal and dual optimality gaps at iteration $k$ (defined in the statement of Theorem~\ref{thm:main}), $\alpha$ is the stepsize, and in \eqref{delta_primal}, we have used \eqref{L_difference}. Adding \eqref{delta_dual} and \eqref{delta_primal}, we have 
\begin{align}
	[\Delta_d^k+\Delta_p^k]-[\Delta_d^{k-1}+\Delta_p^{k-1}]&\leq \alpha\norm{\bx^{1,k}-\bx^{2,k}}^2-\gamma\norm{\bx^{k+1}-\bx^k}^2-2\alpha(\bx^{1,k}-\bx^{2,k})^T(\bar{\bx}^{1,k}-\bar{\bx}^{2,k})\\
	&=\alpha\norm{\bx^{1,k}-\bx^{2,k}-\bar{\bx}^{1,k}+\bar{\bx}^{2,k}}^2-\alpha\norm{\bar{\bx}^{1,k}-\bar{\bx}^{2,k}}^2-\rho\norm{\bx^{k+1}-\bx^k}^2\\
	&\leq (2\alpha\tau_p^2\sigma^2-\rho)\norm{\bx^{k+1}-\bx^k}^2-\alpha\norm{\bar{\bx}^{1,k}-\bar{\bx}^{2,k}}^2,\label{key_main}
\end{align}
where the last inequality comes from \eqref{x_sigma} and the Cauchy-Schwarz inequality.

Assuming that the stepsize $\alpha$ is chosen sufficient small such that $0<\alpha<\frac{\rho}{2\tau_p^{2}\sigma^{2}}$, and substituting \eqref{d} and \eqref{zeta_two} into \eqref{key_main}, we have 
\begin{align}
	[\Delta_d^k+\Delta_p^k]-[\Delta_d^{k-1}+\Delta_p^{k-1}]&\leq -\frac{\rho-2\alpha\tau_p^2\sigma^2}{\xi+\xi'\tau_p\sigma^2}\Delta_p^k-\frac{\alpha}{\tau'}\Delta_d^k\\
	&\leq -\min\{\frac{\rho-2\alpha\tau_p^2\sigma^2}{\xi+\xi'\tau_p\sigma^2},\frac{\alpha}{\tau'}\}[\Delta_d^k+\Delta_p^k].
\end{align}

Therefore, we have
\begin{equation}\label{main:result}
    0\leq [\Delta^k_p+\Delta^k_d]\leq \frac{1}{\lambda+1}[\Delta^{k-1}_p+\Delta^{k-1}_d],
\end{equation}
where $\lambda=\min\{\frac{\rho-2\alpha\tau_p^2\sigma^2}{\zeta+\zeta'\tau_p^2\sigma^2},\frac{\alpha}{\tau'}\}>0$. 
Therefore, $[\Delta^k_p+\Delta^k_d]\leq (\frac{1}{\lambda+1})^k[\Delta^0_p+\Delta^0_d]$, implying that $\Delta^k_p$ and $\Delta^k_d$ converges to zero Q-linearly.

\end{document}